\newtheorem{theorem}{Theorem}
\newtheorem{remark}{Remark}
\newtheorem{lemma}{Lemma}
\newtheorem{definition}{Definition}
\DeclareMathOperator{\sign}{sign}
\newcommand{\abs}[1]{\left\lvert#1\right\rvert}
\newcommand{\bigabs}[1]{\big\lvert#1\big\rvert}
\newcommand{\norm}[1]{\left\lVert#1\right\rVert}
\newcommand{\Rset}{\mathbb{R}}
\newcommand{\eps}{\varepsilon}
\newcommand{\Oinf}[1]{\mathcal{O}_{\infty}(\varepsilon^{#1})}
\DeclareMathOperator{\modfun}{mod}
\DeclareMathOperator{\wrap}{w}
\DeclareMathOperator{\bigO}{\mathcal{O}}
\newcommand{\umax}{u_{m}}
\newcommand{\tdot}{\, \widetilde{\cdot} \,}
\title{\LARGE \bf Adding virtual measurements by PWM-induced signal injection}
\author{Dilshad Surroop\textsuperscript{1,2}, Pascal Combes\textsuperscript{2}, Philippe Martin\textsuperscript{1} and Pierre Rouchon\textsuperscript{1}% <-this % stops a space
	\thanks{\textsuperscript{1}~D.~Surroop, P.~Martin and P.~Rouchon are with the Centre Automatique et Systèmes, MINES ParisTech, PSL Research University, Paris, France
		{\tt\footnotesize\{dilshad.surroop, philippe.martin,pierre.rouchon\}@mines-paristech.fr}}%
	\thanks{\textsuperscript{2}~D.~Surroop and P.~Combes  are with Schneider Toshiba Inverter Europe, Pacy-sur-Eure, France
		{\tt\footnotesize  pascal.combes@se.com}}
}
\begin{document}
\maketitle

\begin{abstract}
We show that for PWM-operated devices, it is possible to benefit from signal injection \emph{without an external probing signal}, by suitably using the excitation provided by the PWM itself. As in the usual signal injection framework conceptualized in~\cite{CombeJMMR2016ACC}, an extra ``virtual measurement'' can be made available for use in a control law, but without the practical drawbacks caused by an external signal. 
\end{abstract}

\section{Introduction}\label{sec:introduction}
Signal injection is a control technique which consists in adding a fast-varying probing signal to the control input. This excitation creates a small ripple in the measurements, which contains useful information if properly decoded. The idea was introduced in~\cite{JanseL1995ITIA,CorleL1998ITIA} for controlling electric motors at low velocity using only measurements of currents. It was later conceptualized in~\cite{CombeJMMR2016ACC} as a way of producing ``virtual measurements'' that can be used to control the system, in particular to overcome observability degeneracies. Signal injection is a very effective method, see e.g. applications to electromechanical devices along these lines in~\cite{JebaiMMR2016IJC,YiOZ2018SCL}, but it comes at a price: the ripple it creates may in practice yield unpleasant acoustic noise and excite unmodeled dynamics, in particular in the very common situation where the device is fed by a Pulse Width Modulation (PWM) inverter; indeed, the frequency of the probing signal may not be as high as desired so as not to interfere with the PWM (typically, it can not exceed~\SI{500}{\hertz} in an industrial drive with a \SI{4}{\kilo\hertz}-PWM frequency).

The goal of this paper is to demonstrate that for PWM-operated devices, it is possible to benefit from signal injection \emph{without an external probing signal}, by suitably using the excitation provided by the PWM itself, as e.g. in\cite{WangX2004TPE}.
% This idea was successfully harnessed for the sensorless control of PM machines in \cite{WangX2004TPE}.
More precisely, consider the Single-Input Single-Output system 
\begin{subequations}\label{eq:SISO}
	\begin{IEEEeqnarray}{rCl}
		\dot x &=& f(x) + g(x)u,\label{eq:dynamics}\\
		y &=& h(x),
	\end{IEEEeqnarray}
\end{subequations}
where $u$ is the control input and $y$ the measured output. We first show in section~\ref{sec:pwm} that when the control is impressed through PWM, the dynamics may be written as
\begin{IEEEeqnarray}{rCl}
	\dot x &=& f(x) + g(x)\bigl(u+s_0(u,\tfrac{t}{\varepsilon}) \bigr),\label{eq:s0dynamics}
\end{IEEEeqnarray}
with $s_0$ $1$-periodic and zero-mean in the second argument, i.e. $s_0(u,\sigma+1)=s_0(u,\sigma)$ and $\int_0^1s_0(u,\sigma)\,d\sigma=0$ for all~$u$; $\varepsilon$ is the PWM period, hence assumed small. The difference with usual signal injection is that the probing signal $s_0$ generated by the modulation process now depends not only on time, but also on the control input~$u$. This makes the situation more complicated, in particular because $s_0$ can be discontinuous in both its arguments. Nevertheless, we show in section~\ref{sec:averaging} that the second-order averaging analysis of~\cite{CombeJMMR2016ACC} can be extended to this case. In the same way, we show in section~\ref{sec:demodulation} that the demodulation procedure of~\cite{CombeJMMR2016ACC} can be adapted to make available the so-called virtual measurement
\begin{IEEEeqnarray*}{rCl}
	y_v &:=& H_1(x) := \varepsilon h'(x)g(x),
\end{IEEEeqnarray*}
in addition to the actual measurement $y_a := H_0(x):= h(x)$. This extra signal is likely to simplify the design of a control law, as illustrated on a numerical example in section~\ref{sec:example}.

Finally, we list some definitions used throughout the paper; $S$ denotes a function of two variables, which is $T$-periodic in the second argument, i.e. $S(v,\sigma+T)=S(v,\sigma)$ for all~$v$:
\begin{itemize}
	\item the mean of~$S$ in the second argument is the function (of one variable) $\overline{S}(v):=\tfrac{1}{T}\int_0^{T}S(v,\sigma)d\sigma$; $S$ has zero mean in the second argument if $\overline{S}$ is identically zero
	\item if $S$ has zero mean in the second argument, its zero-mean primitive in the second argument is defined by
	\begin{IEEEeqnarray*}{rCl}
		S_1(v,\tau) &:=& \int_{0}^{\tau}S(v,\sigma)d\sigma -  \frac{1}{T}\int_{0}^{T}\int_{0}^{\tau}S(v,\sigma)d\sigma d\tau;
	\end{IEEEeqnarray*}
	notice $S_1$ is $T$-periodic in the second argument because $S$ has zero mean in the second argument
	\item the moving average $M(k)$ of~$k$ is defined by
	\begin{IEEEeqnarray*}{rCl}
		M(k)(t) &:=& \frac{1}{\varepsilon}\int_{t-\eps}^{t}k(\tau)d\tau
	\end{IEEEeqnarray*}
	\item 
%	$\mathcal{O}$ is the ``Big-O'' notation of analysis: if $\alpha$ and $\beta$ are two functions, $\alpha(\eps)=\mathcal{O}\bigl(\beta(\eps)\bigr)$ means $\abs{\alpha(\eps)}\leq C\abs{\beta(\eps)}$ for some positive constant~$C$ when $\eps\rightarrow0$. FINIR INFINITY
	$\mathcal{O}_\infty$ denotes the uniform ``big O'' symbol of analysis, namely $f(z,\varepsilon)=\Oinf{p}$ if $\abs{f(z,\varepsilon)}\le K\varepsilon^p$ for $\eps$ small enough, with $K>0$ independent of $z$ and $\varepsilon$.
\end{itemize}

\section{PWM-induced signal injection}\label{sec:pwm}
When the control input~$u$ in~\eqref{eq:dynamics} is impressed through a PWM process with period~$\varepsilon$, the resulting dynamics reads
\begin{IEEEeqnarray}{rCl}
	\dot x &=& f(x) + g(x) \mathcal{M}\bigl(u,\tfrac{t}{\varepsilon}\bigr),\label{eq:Mdynamics}
\end{IEEEeqnarray}
with $\mathcal{M}$ $1$-periodic and mean~$u$ in the second argument;
%, i.e. $\mathcal{M}(u,\sigma+1) = \mathcal{M}(u,\sigma)$ and $\int_0^1 \mathcal{M}(u,\sigma)\,d\sigma = u$ for any fixed $u$; 
the detailed expression for $\mathcal{M}$ is given below. Setting  $s_0(u,\sigma):=\mathcal{M}(u,\sigma)-u$, \eqref{eq:Mdynamics} obviously takes the form~\eqref{eq:s0dynamics}, with $s_0$ 1-periodic and zero-mean in the second argument.

\begin{figure}
	\includegraphics[width=\columnwidth]{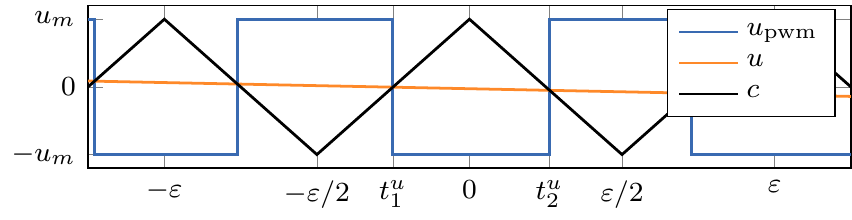}
	\caption{PWM: $u$ is compared to $c$ to produce $u_{\rm pwm}$.}
	\label{fig:pwm}
\end{figure}

%\begin{figure}
%	\includegraphics[width=\columnwidth]{s0_3d.pdf}
%	\includegraphics[width=\columnwidth]{s1_3d.pdf}
%	\caption{$S_0$ (top), $S_1$ (bottom).}
%	\label{fig:u}
%\end{figure}

Classical PWM with period~$\varepsilon$ and range~$[-\umax,\umax]$ is obtained by comparing the input signal~$u$ to the $\varepsilon$-periodic sawtooh carrier defined by
\begin{IEEEeqnarray*}{rCl}
	c(t) &:=& 
	\begin{dcases*} 
		\umax+4\wrap\bigl(\tfrac{t}{\varepsilon}\bigr) & if $-\frac{\umax}{2}\le\wrap\bigl(\frac{t}{\varepsilon}\bigr)\le0$\\ 
		\umax-4\wrap\bigl(\tfrac{t}{\varepsilon}\bigr) & if $0\le\wrap(\frac{t}{\varepsilon})\le\frac{\umax}{2}$;
	\end{dcases*}
\end{IEEEeqnarray*}
the 1-periodic function $\wrap(\sigma):=\umax\modfun(\sigma+\frac{1}{2},1)-\frac{\umax}{2}$ wraps the normalized time~$\sigma=\frac{t}{\varepsilon}$ to $[-\frac{\umax}{2},\frac{\umax}{2}]$. If $u$ varies slowly enough, it crosses the carrier~$c$ exactly once on each rising and falling ramp, at times $t_1^u<t_2^u$ such that
\begin{IEEEeqnarray*}{rCl}
	u(t_1^u) &=& \umax+4\wrap\Bigl(\frac{t_1^u}{\varepsilon}\Bigr)\\
	u(t_2^u) &=& \umax-4\wrap\Bigl(\frac{t_2^u}{\varepsilon}\Bigr).
\end{IEEEeqnarray*}
\begin{figure}
	\centering
		\hspace*{0.6em}\includegraphics{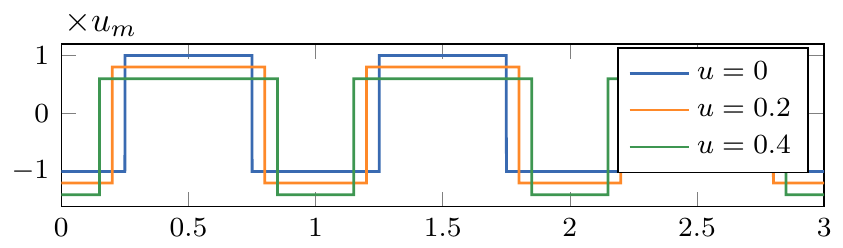}
		\includegraphics{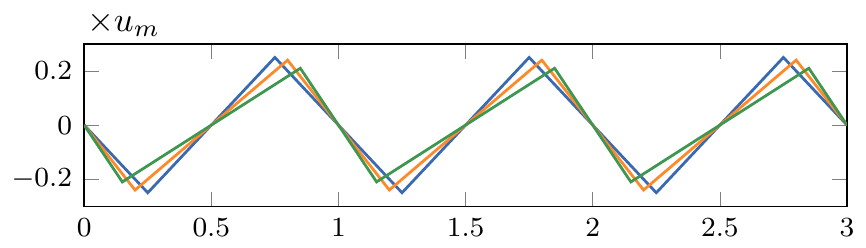}
		\includegraphics{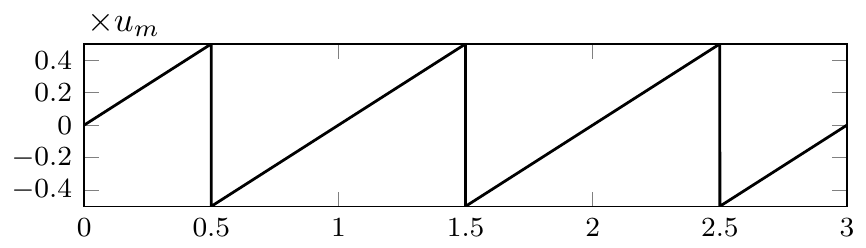}
	\caption{$s_0(u,\cdot)$ (top) and $s_1(u,\cdot)$ (middle) for $u=0, 0.2, 0.4$; $\wrap$ (bottom).}
%	\caption{$s_0(u,\cdot)$ (top) and $s_1(u,\cdot)$ (middle) for $u=0$, $0.2$ and $0.4$; $\wrap$ (bottom).}
\end{figure}

The PWM-encoded signal is therefore given by
\begin{IEEEeqnarray*}{rCl}
	u_{\rm pwm}(t)&=&
	\begin{dcases*} 
		\umax & if $-\frac{\umax}{2}<\wrap\bigl(\frac{t}{\varepsilon}\bigr)\le\wrap\bigl(\frac{t_1^u}{\varepsilon}\bigr)$\\ 
		-\umax & if $\wrap\bigl(\frac{t_1^u}{\varepsilon}\bigr)<\wrap\bigl(\frac{t}{\varepsilon}\bigr)\le\wrap\bigl(\frac{t_2^u}{\varepsilon}\bigr)$\\ 
		\umax & if $\wrap\bigl(\frac{t_2^u}{\varepsilon}\bigr)<\wrap\bigl(\tfrac{t}{\varepsilon}\bigr)\le\frac{\umax}{2}$.
	\end{dcases*}
\end{IEEEeqnarray*}
Fig.~\ref{fig:pwm} illustrates the signals $u$, $c$ and~$u_{\rm pwm}$.
The function 
\begin{IEEEeqnarray*}{rCl}
	\mathcal{M}(u,\sigma)&:=& 
	\begin{dcases*} 
		\umax & if $-2\umax<4\wrap(\sigma)\le u-\umax$\\ 
		-\umax & if $u - \umax<4\wrap(\sigma)\le \umax -u$\\ 
		\umax & if $\umax-u<4\wrap(\sigma)\le2\umax$
	\end{dcases*}\\
	&=& \umax+\sign\bigl(u-\umax-4\wrap(\sigma)\bigr)\\
	&&\quad+\>\sign\bigl(u-\umax+4\wrap(\sigma)\bigr),
\end{IEEEeqnarray*}
which is obviously 1-periodic and with mean~$u$ with respect to its second argument, therefore completely describes the PWM process since 
$u_{\rm pwm}(t)=\mathcal{M}\bigl(u(t),\frac{t}{\varepsilon}\bigr)$.

Finally, the induced zero-mean probing signal is
\begin{IEEEeqnarray*}{rCl}
	s_0(u,\sigma) &:=& \mathcal{M}(u,\sigma)-u\\
	&=& \umax-u+\sign\bigl(\tfrac{u-\umax}{4} - \wrap(\sigma)\bigr)\\
	&&\quad+\>\sign\bigl(\tfrac{u-\umax}{4}+\wrap(\sigma)\bigr),
\end{IEEEeqnarray*}
and its zero-mean primitive in the second argument is
\begin{IEEEeqnarray*}{rCl}
	s_1(u,\sigma) &:=& \bigl(1-\tfrac{u}{\umax}\bigr)\wrap(\sigma) - \bigabs{\tfrac{u-\umax}{4}-\wrap(\sigma)} \\
	&&+\, \abs{\tfrac{u-\umax}{4}+\wrap(\sigma)}.
\end{IEEEeqnarray*}
\begin{remark}\label{rem:regularity}
	As $s_0$ is only piecewise continuous, one might expect problems to define the ``solutions'' of~\eqref{eq:s0dynamics}. But as noted above, if the input $u(t)$ of the PWM encoder varies slowly enough, its output $u_{\rm pwm}(t)=\mathcal{M}\bigl(u(t),\frac{t}{\varepsilon}\bigr)$ will have exactly two discontinuities per PWM period. Chattering is therefore excluded, which is enough to ensure the existence and uniqueness of the solutions of~\eqref{eq:s0dynamics}, see~\cite{LehmaB1996TPE}, without the need for the more general Filipov theory~\cite{Filip1988book}. Of course, we assume (without loss of generality in practice) that $f$, $g$ and $h$ in~\eqref{eq:SISO} are smooth enough.

	Notice also $s_1$ is continuous and piecewise~$C^1$ in both its arguments. The regularity in the second argument was to be expected as $s_1(u,\cdot)$ is a primitive of~$s_0(u,\cdot)$; on the other hand, the regularity in the first argument stems from the specific form of~$s_0$.
\end{remark}

\section{Averaging and PWM-induced injection}\label{sec:averaging}
Section~\ref{sec:mainResult} outlines the overall approach and states the main Theorem~\ref{theorem:main}, which is proved in the somewhat technical section~\ref{mainProof}. As a matter of fact, the proof can be skipped without losing the main thread; suffice to say that if $s_0$ were Lipschitz in the first argument, the proof would essentially be an extension of the analysis by "standard" second-order averaging of~\cite{CombeJMMR2016ACC}, with more involved calculations

\subsection{Main result}\label{sec:mainResult}
Assume we have designed a suitable control law 
\begin{IEEEeqnarray*}{rCl}
	\overline{u} &=& \alpha(\overline{\eta},\overline{Y},t) \\
	\dot {\overline{\eta}} &=& a(\overline{\eta},\overline{Y},t),
\end{IEEEeqnarray*}
where $\overline{\eta} \in \Rset^q$, for the system
\begin{IEEEeqnarray*}{rCl}
	\dot{\overline{x}} &=& f(\overline{x}) + g(\overline x)\overline{u},\\
	\overline{Y} &=& H(\overline{x}) 
	:= \begin{pmatrix} h(\overline{x})\\ \varepsilon h'(\overline{x})g(\overline{x}) \end{pmatrix}.
\end{IEEEeqnarray*}
By ``suitable'', we mean the resulting closed-loop system
\begin{subequations}
	\label{eq:claveraged}
	\begin{IEEEeqnarray}{rCl}
		\dot{\overline{x}} &=& f(\overline{x}) + g(\overline x)\alpha\bigl( \overline{\eta},H(\overline{x}),t \bigr) \\
		\dot{\overline{\eta}} &=& a \bigl( \overline{\eta}, H(\overline{x}),t \bigr)
	\end{IEEEeqnarray}
\end{subequations}
has the desired exponentially stable behavior. We have changed the notations of the variables with $\overline{\,\cdot\,}$ to easily distinguish between the solutions of \eqref{eq:claveraged} and of \eqref{eq:modified_system} below. Of course, this describes an unrealistic situation:
\begin{itemize}
	\item PWM is not taken into account
	\item the control law is not implementable, as it uses not only the actual output $\overline y_a=h(\overline{x})$, but also the a priori not available virtual output $\overline y_v=\varepsilon h'(\overline{x})g(\overline{x})$.
\end{itemize}

Define now (up to $\scriptstyle{\Oinf{2}}$) the function
\begin{multline}\label{eq:Hbar}
	\overline{H}(x,\eta,\sigma,t):=\\
	H\biggl(x-\varepsilon g(x) s_1\Bigl(\alpha\bigl(\eta,H(x),t\bigr),\sigma\Bigr)\biggr) + \Oinf{2},
\end{multline}
where $s_1$ is the zero-mean primitive of~$s_0$ in the second argument, and consider the control law
\begin{subequations}
	\label{eq:modifiedlaw}
	\begin{IEEEeqnarray}{rCl}
		u &=& \alpha\bigl(\eta,\overline{H}(x,\eta,\tfrac{t}{\varepsilon},t),t\bigr)\\
		\dot \eta &=& a\bigl(\eta,\overline{H}(x,\eta,\tfrac{t}{\varepsilon},t),t\bigr).
	\end{IEEEeqnarray}
\end{subequations}
The resulting closed-loop system, including PWM, reads
\begin{subequations}
	\label{eq:modified_system}
	\begin{IEEEeqnarray}{rCl}
		\dot x &=& f(x) + 
		g(x)\mathcal{M}\Bigl(\alpha\bigl(\eta,\overline{H}(x,\eta,\tfrac{t}{\varepsilon},t),t\bigr),\tfrac{t}{\varepsilon}\Bigr)
		\IEEEeqnarraynumspace\\
		\dot \eta &=& a\bigl(\eta,\overline{H}(x,\eta,\tfrac{t}{\varepsilon},t),t\bigr).
	\end{IEEEeqnarray}
\end{subequations}
Though PWM is now taken into account, the control law~\eqref{eq:modifiedlaw} still seems to contain unknown terms. Nevertheless, it will turn out from the following result that it can be implemented.
\begin{theorem}\label{theorem:main}
	Let $(x(t),\eta (t))$ be the solution of~\eqref{eq:modified_system} starting from $(x_0,\eta_0)$, and define
	$u(t):=\alpha\bigl( \eta(t), H\bigl(x(t)\bigr), t \bigr)$ and $y(t):= H\bigl(x(t)\bigr)$;
	let $(\overline x(t), \overline \eta(t))$ be the solution of~\eqref{eq:claveraged} starting from $\bigl(x_0-\varepsilon g(x_0)s_1\bigl(u(0),0\bigl),\eta_0\bigr)$, and define $\overline u(t):=\alpha\bigl( \overline \eta(t), H\bigl(\overline x(t)\bigr), t \bigr)$. Then, for all $t\ge 0$,
	\begin{subequations}
		\begin{IEEEeqnarray}{rCl}
			\label{subeq:x}
			x(t) &=& \overline x(t) + \varepsilon g\bigl(\overline x(t)\bigr)s_1\bigl(\overline u(t),\tfrac{t}{\varepsilon}\bigr) + \Oinf{2} \\
			\label{subeq:eta}
			\eta(t) &=& \overline \eta (t) + \Oinf{2} \\
			\label{subeq:y}
			y(t) &=& H_0\bigl(\overline x(t)\bigr) + H_1\bigl(\overline x(t)\bigr) s_1\bigl(\overline u(t),\tfrac{t}{\varepsilon}\bigr) + \Oinf{2}.\IEEEeqnarraynumspace
		\end{IEEEeqnarray} 
	\end{subequations}
\end{theorem}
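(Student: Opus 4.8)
The plan is to exhibit~\eqref{eq:modified_system} as an $\mathcal{O}_\infty(\varepsilon^2)$ perturbation of the averaged closed loop~\eqref{eq:claveraged}, by two successive near-identity changes of variables — the first of them being exactly the one read off from~\eqref{subeq:x}. Along the solution of~\eqref{eq:modified_system}, write $u(t):=\alpha\bigl(\eta(t),\overline H(x(t),\eta(t),\tfrac t\varepsilon,t),t\bigr)$ for the signal actually fed to the PWM and set $z:=x-\varepsilon g(x)\,s_1\bigl(u,\tfrac t\varepsilon\bigr)$. A first computation, using $\partial_\sigma s_1=s_0$ and $\mathcal M(u,\sigma)=u+s_0(u,\sigma)$, shows that the $\mathcal{O}_\infty(1)$ oscillation $g(x)s_0(u,\tfrac t\varepsilon)$ in $\dot x$ cancels exactly against the term coming from $\partial_\sigma s_1$ — this is the whole point of subtracting the zero-mean primitive $s_1$ — and, expanding the remaining $\varepsilon$-terms around $z$ (a Lipschitz estimate, as $x-z=\mathcal{O}_\infty(\varepsilon)$), that
\[
	\dot z = f(z)+g(z)u+\varepsilon\,R\bigl(z,u,\dot u,\tfrac t\varepsilon\bigr)+\mathcal{O}_\infty(\varepsilon^2),
\]
where $R$ is built from $f,g$ and their first derivatives and from $s_1,s_0,\partial_u s_1$, and — crucially — has zero mean in its last argument: $s_1(u,\cdot)$ is zero-mean by definition, $s_1(u,\cdot)s_0(u,\cdot)=\tfrac12\partial_\sigma\bigl(s_1(u,\cdot)^2\bigr)$ is zero-mean by periodicity of $s_1$, and $\partial_u s_1(u,\cdot)$ is zero-mean since $\int_0^1 s_1(u,\sigma)\,d\sigma\equiv0$. (One checks beforehand $\dot x,\dot\eta,\dot u=\mathcal{O}_\infty(1)$, $\mathcal M$ being bounded.)

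The next step exploits the specific form of $\overline H$. Because $x=z+\varepsilon g(x)s_1(u,\tfrac t\varepsilon)$ and, by Remark~\ref{rem:regularity}, $s_1$ is \emph{Lipschitz} (indeed piecewise $C^1$) in its first argument, one has $\alpha\bigl(\eta,H(x),t\bigr)=u+\mathcal{O}_\infty(\varepsilon)$, hence $\varepsilon g(x)s_1\bigl(\alpha(\eta,H(x),t),\tfrac t\varepsilon\bigr)=\varepsilon g(x)s_1(u,\tfrac t\varepsilon)+\mathcal{O}_\infty(\varepsilon^2)$, so the argument of $H$ in~\eqref{eq:Hbar} is $z+\mathcal{O}_\infty(\varepsilon^2)$ and therefore $\overline H\bigl(x,\eta,\tfrac t\varepsilon,t\bigr)=H(z)+\mathcal{O}_\infty(\varepsilon^2)$: the design of $\overline H$ is precisely what makes its $\tfrac t\varepsilon$-ripple cancel to this order (in particular $u$ and $\dot u$ are then slowly varying up to $\mathcal{O}_\infty(\varepsilon)$, as used implicitly above). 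Plugging this into~\eqref{eq:modifiedlaw}, the closed loop reads $\dot z=f(z)+g(z)\alpha(\eta,H(z),t)+\varepsilon R+\mathcal{O}_\infty(\varepsilon^2)$ and $\dot\eta=a(\eta,H(z),t)+\mathcal{O}_\infty(\varepsilon^2)$; after replacing $\dot u$ in $R$ by its leading (slowly varying) expression along this flow, so that $R$ is a genuine function of the slow variables $(z,\eta,t)$ and of $\tfrac t\varepsilon$ with zero mean in the latter, a second near-identity change $z=\zeta+\varepsilon^2 w$, with $w$ a $\tfrac t\varepsilon$-primitive of $R$ (well defined since $R$ is zero-mean, $w$ inheriting the regularity of $s_1$), removes the $\varepsilon R$ term. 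Thus $(\zeta,\eta)$, with $\zeta=z+\mathcal{O}_\infty(\varepsilon^2)$, satisfies~\eqref{eq:claveraged} up to an $\mathcal{O}_\infty(\varepsilon^2)$ perturbation of the vector field, starting from $\bigl(x_0-\varepsilon g(x_0)s_1(u(0),0)+\mathcal{O}_\infty(\varepsilon^2),\eta_0\bigr)$ — that is, $\mathcal{O}_\infty(\varepsilon^2)$-close to the initial condition prescribed for $(\overline x,\overline\eta)$.

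To pass to all times I would invoke the standing assumption that~\eqref{eq:claveraged} is exponentially stable: the standard perturbation lemma for exponentially stable systems then gives $\zeta(t)=\overline x(t)+\mathcal{O}_\infty(\varepsilon^2)$ and $\eta(t)=\overline\eta(t)+\mathcal{O}_\infty(\varepsilon^2)$ \emph{uniformly in} $t\ge0$ — this is where exponential, as opposed to merely asymptotic, stability is needed. This is already~\eqref{subeq:eta}, and it gives $z(t)=\overline x(t)+\mathcal{O}_\infty(\varepsilon^2)$; unwinding $x=z+\varepsilon g(x)s_1(u,\tfrac t\varepsilon)$, using once more that $g$ and $s_1(\cdot,\sigma)$ are Lipschitz and that $u=\overline u+\mathcal{O}_\infty(\varepsilon^2)$, yields $x(t)=\overline x(t)+\varepsilon g(\overline x(t))s_1(\overline u(t),\tfrac t\varepsilon)+\mathcal{O}_\infty(\varepsilon^2)$, i.e.~\eqref{subeq:x}. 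Finally~\eqref{subeq:y} follows from a first-order Taylor expansion of $H=\bigl(h,\varepsilon h'g\bigr)^{\!\top}$ along~\eqref{subeq:x}: the actual-output component gives $h(x)=H_0(\overline x)+\varepsilon h'(\overline x)g(\overline x)s_1(\overline u,\tfrac t\varepsilon)+\mathcal{O}_\infty(\varepsilon^2)=H_0(\overline x)+H_1(\overline x)s_1(\overline u,\tfrac t\varepsilon)+\mathcal{O}_\infty(\varepsilon^2)$, while the ripple carried by the virtual-output component is $\varepsilon\cdot H_1=\mathcal{O}_\infty(\varepsilon^2)$.

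The main obstacle — and the reason this is more than a transcription of the second-order averaging of~\cite{CombeJMMR2016ACC} — is the poor regularity of $s_0$ in its first argument: $s_0(\cdot,\sigma)$ is only piecewise continuous, so the vector field of~\eqref{eq:s0dynamics} cannot be Taylor- or even Lipschitz-expanded in the slow variable $u$ as classical averaging requires. The remedy is structural: $s_0$ is never differentiated in $u$ — it appears in~\eqref{eq:s0dynamics} only through the exact cancellation above and, multiplied by $\varepsilon$, inside the zero-mean remainder $R$ (there only via $s_1 s_0=\tfrac12\partial_\sigma s_1^2$); wherever a derivative or a sharp estimate in $u$ is genuinely needed it is $s_1$ that occurs, which by Remark~\ref{rem:regularity} is Lipschitz and piecewise $C^1$ in $u$. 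One must still control the $\sigma$-sets on which $s_0(u,\cdot)$ jumps as $u$ moves: their measure is $\mathcal{O}_\infty(\varepsilon)$ over one PWM period, so their contribution is of higher order after integration. A minor preliminary, already secured by Remark~\ref{rem:regularity}, is that the no-chattering condition holds along the closed-loop trajectory for $\varepsilon$ small, so that~\eqref{eq:modified_system} is well posed to begin with.
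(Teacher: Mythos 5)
Your overall strategy is the paper's: the change of variables $z=x-\eps g(x)s_1(u,\tfrac t\eps)$ is exactly the near-identity transformation $X=\widetilde X+\eps W$ of~\eqref{eq:transformation}, the exact cancellation of the $\mathcal O_\infty(1)$ ripple and the identification of the zero-mean structure of the $\eps$-order remainder (via $s_0s_1=\tfrac12\partial_\sigma\bigl(s_1^2\bigr)$ and $\overline{\partial_1 s_1}=0$) are the content of Lemma~\ref{lemma:transformation}, and the continuation to all $t\ge0$ by exponential stability of~\eqref{eq:claveraged} is invoked the same way. The one methodological difference is real but benign: you remove the zero-mean $\eps R$ term by a \emph{second} near-identity transformation $z=\zeta+\eps^2 w$, whereas the paper keeps it and bounds its integral directly through an extension of Besjes' lemma (Lemma~\ref{lemma:besjes}) inside the Gronwall estimate of Lemma~\ref{lemma:estimation}. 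Both are standard variants of second-order averaging; the Besjes route demands less regularity of the $\sigma$-primitive of the remainder, which is why the paper prefers it here.

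There is, however, one genuine gap, and it sits exactly where the paper's technical work lies. When you make $R$ ``a genuine function of the slow variables,'' you replace the actual PWM input $u=\alpha\bigl(\eta,\overline H(x,\eta,\tfrac t\eps,t),t\bigr)$ by its slow approximation $\alpha(\eta,H(z),t)=u+\Oinf{2}$ inside $s_0$ (and inside $\partial_1 s_1$, which also jumps in $u$). Since $s_0$ is \emph{not} Lipschitz in its first argument, this replacement error is $\mathcal O_\infty(1)$ on $\sigma$-sets of small measure: it is \emph{not} a pointwise $\Oinf{2}$ perturbation of the vector field, so your concluding step (``\eqref{eq:claveraged} up to an $\Oinf{2}$ perturbation of the vector field, hence the standard perturbation lemma applies'') does not apply as stated. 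You do name the phenomenon in your last paragraph — jump sets of small measure, contribution higher order ``after integration'' — and that is the correct idea, but it cannot be folded back into a pointwise remainder. The offending term (the paper's $\phi=\eps G\,\Delta s_0^\alpha+\eps^2\Psi_1$) must be kept separate, shown to be small only \emph{in average} (the ``slowly-varying in average'' and ``$\Oinf{3}$ in average'' definitions), and the comparison with the averaged flow must then be run on the integral form of the equations so that Gronwall's lemma only ever sees $\int_0^\sigma\phi$ and $\int_0^\sigma\Phi$ — this is precisely Lemmas~\ref{lemma:estimation} and~\ref{lemma:besjes}. Supplying this in-average bookkeeping is not a detail: it is the part of the argument that does not follow from classical second-order averaging or from~\cite{CombeJMMR2016ACC}.
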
\vspace{2mm}

The practical meaning of the theorem is the following. As the solution $\bigl(x(t),\eta(t))\bigr)$ is piecewise~$C^1$, we have by Taylor expansion using~\eqref{subeq:x}-\eqref{subeq:eta} that
$u(t)=\overline u(t)+\Oinf{2}$. In the same way, as $s_1$ is also piecewise~$C^1$, we have
\begin{IEEEeqnarray*}{rCl}
	s_1\bigl(u(t),\tfrac{t}{\varepsilon}\bigr)
	&=& s_1\bigl(\overline u(t),\tfrac{t}{\varepsilon}\bigr) + \Oinf{2}.
\end{IEEEeqnarray*} 
As a consequence, we can invert~\eqref{subeq:x}-\eqref{subeq:eta}, which yields
\begin{subequations}
	\label{eq:inverse_xbar_etabar}
\begin{IEEEeqnarray}{rCl}
	\overline x(t) &=& x(t) - \varepsilon g\bigl(x(t)\bigr)s_1\bigl(u(t),\tfrac{t}{\varepsilon}\bigr) + \Oinf{2} \\
	\overline\eta(t) &=& \eta(t)+\Oinf{2}.
\end{IEEEeqnarray}
\end{subequations}
Using this into~\eqref{eq:Hbar}, we then get
\begin{IEEEeqnarray}{rCl}
	\IEEEeqnarraymulticol{3}{l}{
	\overline{H}\bigl(x(t),\eta(t),\tfrac{t}{\varepsilon},t\bigr)}\nonumber\\*\qquad
		&=& H\Bigl(x(t)-\varepsilon g\bigl(x(t)\bigr) s_1\bigl(u(t),\tfrac{t}{\varepsilon}\bigr)\Bigr) + \Oinf{2},\IEEEeqnarraynumspace\nonumber\\
		&=& H\bigl(\overline x(t)\bigr)+ \Oinf{2}.\label{eq:HbarHxbar}
\end{IEEEeqnarray}
On the other hand, we will see in section~\ref{sec:demodulation} that, thanks to~\eqref{subeq:y}, we can produce  an estimate $\widehat{Y}=H(\overline{x})+\Oinf{2}$. This means the PWM-fed dynamics~\eqref{eq:Mdynamics} acted upon by the implementable feedback
\begin{IEEEeqnarray*}{rCl}
	u &=& \alpha(\eta,\widehat{Y},t)\\
	\dot \eta &=& a(\eta,\widehat{Y},t).
\end{IEEEeqnarray*}
behaves exactly as the ``ideal'' closed-loop system~\eqref{eq:claveraged}, except for the presence of a small ripple (described by~\eqref{subeq:x}-\eqref{subeq:eta}).

\begin{remark}\label{rem:regularity2}
	Notice that, according to Remark~\ref{rem:regularity}, $H_0\bigl(\overline x(t)\bigr)$ and $H_1\bigl(\overline x(t)\bigr)$ in~\eqref{subeq:y} may be as smooth as desired (the regularity is inherited from only $f,g,h,\alpha,a$); on the other hand, $s_1\bigl(u(t),\tfrac{t}{\varepsilon}\bigr)$ is only continuous and piecewise~$C^1$. Nevertheless, this is enough to justify all the Taylor expansions performed in the paper.
\end{remark}

\subsection{Proof of Theorem \ref{theorem:main}}\label{mainProof}
Because of the lack of regularity of~$s_0$, we must go back to the fundamentals of the second-order averaging theory presented in~\cite[chapter 2]{SandeVM2005book} (with slow time dependence \cite[section 3.3]{SandeVM2005book}). We first introduce two ad hoc definitions.
\begin{definition}
	A function $\varphi(X,\sigma)$ is \emph{slowly-varying in average} if there exists $\lambda >0$ such that for $\varepsilon$ small enough,
		\begin{IEEEeqnarray*}{rCl}
			\int_a^{a+T}\hspace{-1.1em} \norm{\varphi \bigl(p(\varepsilon\sigma) + \varepsilon^k q(\sigma),\sigma\bigr) - \varphi \bigl(p(\varepsilon\sigma), \sigma\bigr)}  d\sigma \leq \lambda T\varepsilon^k,
		\end{IEEEeqnarray*}
		where $p$, $q$ are continuous with $q$ bounded; $a$ and $T>0$ are arbitrary constants. Notice that if $\varphi$ is Lipschitz in the first variable then it is slowly-varying in average. The interest of this definition is that it is satisfied by $s_0$. 
\end{definition}
\begin{definition}
	A function $\phi$ is \emph{$\Oinf{3}$ in average} if there exists $K>0$ such that $\norm{\int_0^\sigma \phi\bigl(q(s),s\bigr) \, ds} \leq K \varepsilon^3 \sigma$ for all $\sigma \geq 0$. Clearly, if $\phi$ is $\Oinf{3}$ then it is $\Oinf{3}$ in average.
\end{definition}

The proof of Theorem \ref{theorem:main} follows the same steps as \cite[chapter 2]{SandeVM2005book}, but with weaker assumptions. 
We first rewrite~\eqref{eq:modified_system} in the fast timescale $\sigma:=t/\eps$ as
\begin{IEEEeqnarray}{rCl}
	\label{eq:system_proof}
	\frac{dX}{d\sigma} &=& \varepsilon F(X,\sigma,\eps\sigma).
\end{IEEEeqnarray}
where $X := (x, \eta)$ and
	\begin{IEEEeqnarray*}{rCl}
		F(X,\sigma,\tau) &:=& \begin{pmatrix}
			f(x)+g(x)\mathcal{M}\Bigl(\alpha\bigl(\eta,\overline H(x,\eta,\sigma,\tau),\tau\bigr)\Bigr) \\
			a\bigl(\eta,\overline H (x,\eta,\sigma,\tau),\tau\bigr)
		\end{pmatrix}. \\
	\end{IEEEeqnarray*}
Notice $F$ is $1$-periodic in the second argument. Consider also the so-called averaged system % \eqref{eq:claveraged}
	%\begin{IEEEeqnarray*}{rCl}
	%	\overline F(X,\tau) &:=& \begin{pmatrix}
	%		f(x)+g(x)\alpha\bigl(\eta,\overline H(x,\eta,\sigma,\tau),\tau\bigr) \\
	%		a\bigl(\eta,\overline H (x,\eta,\sigma,\tau),\tau\bigr)
	%	\end{pmatrix}. \\
	%\end{IEEEeqnarray*}
\begin{IEEEeqnarray}{rCl}\label{eq:averagedSystem}
	\frac{d\overline{X}}{d\sigma} &=& \varepsilon\overline{F}(\overline{X},\eps\sigma).
\end{IEEEeqnarray}
where $\overline{F}$ is the mean of $F$ in the second argument.

Define the near-identity transformation 
\begin{IEEEeqnarray}{rCl}
	\label{eq:transformation}
X &=& \widetilde X + \varepsilon W(\widetilde X,\sigma,\eps\sigma),
\end{IEEEeqnarray}
where $\widetilde X := (\widetilde x, \widetilde \eta)$ and
	\begin{IEEEeqnarray*}{rCl}
				W(\widetilde X, \sigma,\tau) :=  
		\begin{pmatrix}
			g(\widetilde x) \\
			0
		\end{pmatrix}s_1\Bigl(\alpha\bigl(\widetilde\eta,H(\widetilde x,\widetilde \eta,\sigma,\tau),\tau\bigr),\sigma\Bigr).
	\end{IEEEeqnarray*}
	Inverting~\eqref{eq:transformation} yields
\begin{IEEEeqnarray}{rCl}
	\label{eq:inversion_x_tilde}
	\widetilde X = X - \varepsilon W(X,\sigma,\varepsilon\sigma) + \Oinf{2}.
\end{IEEEeqnarray}
By lemma~\ref{lemma:transformation}, this transformation puts \eqref{eq:system_proof} into
\begin{IEEEeqnarray}{rCl}
	\label{eq:system_lemma_1}
	\frac{d\widetilde{X}}{d\sigma} &=& \eps\overline F(\widetilde{X},\eps\sigma) + \eps^2\Phi(\widetilde X, \sigma, \eps\sigma)+\phi(\widetilde X,\sigma,\eps\sigma);\IEEEeqnarraynumspace
\end{IEEEeqnarray}
$\Phi$ is periodic and zero-mean in the second argument, and slowly-varying in average, and $\phi$ is $\Oinf{3}$ in average.

By lemma~\ref{lemma:estimation}, the solutions  $\overline X(\sigma)$ and $\widetilde X(\sigma)$ of \eqref{eq:averagedSystem} and~\eqref{eq:system_lemma_1}, starting from the same initial conditions, satisfy
\begin{IEEEeqnarray*}{rCl}
	\widetilde X(\sigma) &=& \overline X(\sigma) + \Oinf{2}.
\end{IEEEeqnarray*}

As a consequence, the solution $X(\sigma)$ of \eqref{eq:system_proof} starting from $X_0$ and the solution $\overline X(\sigma)$ of \eqref{eq:averagedSystem} starting from $X_0 - \varepsilon W(X_0,0,0)$ are related by $X(\sigma) = \overline X(\sigma) + \varepsilon W\bigl(\overline X(\sigma),\sigma,\varepsilon\sigma\bigr) + \Oinf{2}$, which is exactly \eqref{subeq:x}-\eqref{subeq:eta}. Inserting \eqref{subeq:x} in $y = h(x)$ and Taylor expanding yields \eqref{subeq:y}.

\begin{remark}
	If $s_0$ were differentiable in the first variable, $\Phi$ would be Lipschitz and $\phi$ would be $\Oinf{3}$ in \eqref{eq:system_lemma_1}, hence the averaging theory of \cite{SandeVM2005book} would directly apply.
\end{remark}

\begin{remark}
	In the sequel, we prove for simplicity only the estimation $\widetilde X(\sigma) = \overline X(\sigma) + \mathcal{O}(\varepsilon^2)$ on a timescale $1/\varepsilon$. The continuation to infinity follows from the exponential stability of \eqref{eq:claveraged}, exactly as in \cite[Appendix]{CombeJMMR2016ACC}. 

	In the same way, lemma~\ref{lemma:estimation} is proved without slow-time dependence, the generalization being obvious as in \cite[section 3.3]{SandeVM2005book}. 
\end{remark}

\begin{lemma}
	\label{lemma:transformation}
	The transformation~\eqref{eq:transformation} puts \eqref{eq:system_proof} into \eqref{eq:system_lemma_1}, where $\Phi$ is periodic and zero-mean in the second argument, and slowly-varying in average, and $\phi$ is $\Oinf{3}$ in average. 

\end{lemma}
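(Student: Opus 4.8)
The plan is to substitute the near-identity transformation~\eqref{eq:transformation} into the dynamics~\eqref{eq:system_proof} and track the orders in~$\varepsilon$ carefully, keeping in mind that the usual averaging computation must now be done with the weaker regularity of~$s_0$. Writing $X = \widetilde X + \varepsilon W(\widetilde X,\sigma,\eps\sigma)$ and differentiating with respect to~$\sigma$, one gets
\begin{IEEEeqnarray*}{rCl}
	\Bigl(I + \varepsilon\,\partial_{\widetilde X}W\Bigr)\frac{d\widetilde X}{d\sigma} &=& \varepsilon F\bigl(\widetilde X + \varepsilon W,\sigma,\eps\sigma\bigr) - \varepsilon\,\partial_\sigma W - \varepsilon^2\,\partial_\tau W,
\end{IEEEeqnarray*}
so that, inverting $I+\varepsilon\,\partial_{\widetilde X}W = I - \varepsilon\,\partial_{\widetilde X}W + \Oinf{2}$,
\begin{IEEEeqnarray*}{rCl}
	\frac{d\widetilde X}{d\sigma} &=& \varepsilon\Bigl(F(\widetilde X,\sigma,\eps\sigma) - \partial_\sigma W(\widetilde X,\sigma,\eps\sigma)\Bigr) \\
	&& +\> \varepsilon^2\Bigl(\partial_{\widetilde X}F\cdot W - \partial_{\widetilde X}W\cdot(F - \partial_\sigma W) - \partial_\tau W\Bigr) + (\text{remainder}).
\end{IEEEeqnarray*}
The point of choosing $W$ with second-argument partial $\partial_\sigma W = \bigl(\begin{smallmatrix} g(\widetilde x)\\ 0\end{smallmatrix}\bigr) s_0(\alpha(\cdot),\sigma)$ — which is exactly what the definition of $s_1$ as the zero-mean primitive of $s_0$ gives — is that the $\Oinf{1}$ term collapses to $\varepsilon\bigl(F - \partial_\sigma W\bigr)$ whose mean in~$\sigma$ is $\overline F$, and the zero-mean part of $F - \partial_\sigma W$ at order~$\varepsilon$ is absorbed into the transformation. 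So the first step is to verify this cancellation and identify $\overline F(\widetilde X,\eps\sigma)$ as the leading term.

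The second step is to collect the $\varepsilon^2$ terms into a single function $\Phi(\widetilde X,\sigma,\eps\sigma)$ and check its claimed properties. That $\Phi$ is $1$-periodic in~$\sigma$ is clear since $W$, $F$ and their $\widetilde X$- and $\sigma$-derivatives are (recall $s_1$ is $1$-periodic in its second argument). To make $\Phi$ \emph{zero-mean} in the second argument one subtracts off its mean and folds that mean — which depends only on $(\widetilde X,\eps\sigma)$ — into a correction of $\overline F$; since $\overline F$ in~\eqref{eq:averagedSystem} is defined as the mean of the full $F$, one must check that this $\varepsilon^2$-order mean correction is consistent, i.e. that it only shifts $\overline F$ by $\Oinf{2}$, which is harmless for the order of the final estimate. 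The genuinely delicate point is regularity: $\partial_{\widetilde X}W$ involves $\partial_v s_1$, which by Remark~\ref{rem:regularity} exists and is piecewise continuous but not better, and $\partial_{\widetilde X}F$ involves $\mathcal M$, hence $s_0$, which is only piecewise continuous in~$\widetilde x$ through~$u$. So $\Phi$ cannot be claimed Lipschitz in~$\widetilde X$; instead one checks it is \emph{slowly-varying in average} in the sense of the first ad hoc definition. This reduces, after the chain rule, to establishing that $s_0\bigl(\alpha(p(\varepsilon\sigma)+\varepsilon^k q(\sigma)),\sigma\bigr) - s_0\bigl(\alpha(p(\varepsilon\sigma)),\sigma\bigr)$ has integral over any window of length~$T$ bounded by $\lambda T\varepsilon^k$: this is the statement that "$s_0$ is slowly-varying in average", which the paper asserts in the definition itself, and it holds because $s_0(u,\cdot)$ jumps only at the two crossing times, which move continuously (indeed Lipschitz-ly) with~$u$, so a perturbation of size $\varepsilon^k$ in~$u$ changes $\int s_0$ only on a set of measure $\bigO(\varepsilon^k)$ per period where the integrand is bounded.

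The third step is the remainder $\phi$. It consists of (a) the Taylor remainder in $F(\widetilde X + \varepsilon W,\sigma,\eps\sigma) = F(\widetilde X,\sigma,\eps\sigma) + \varepsilon\,\partial_{\widetilde X}F\cdot W + (\cdots)$, and (b) the higher-order terms from inverting $I + \varepsilon\,\partial_{\widetilde X}W$ and from $\partial_\tau W$ contributions beyond $\varepsilon^2$. Naively each of these is $\Oinf{3}$, but because of the weak regularity of $s_0$ the Taylor remainder in~(a) is not pointwise $\Oinf{3}$; what one proves instead is that it is \emph{$\Oinf{3}$ in average} in the sense of the second ad hoc definition, i.e. $\bigl\|\int_0^\sigma \phi(q(s),s)\,ds\bigr\| \le K\varepsilon^3\sigma$. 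This again comes down to the same mechanism: the "bad" part of $\phi$ is supported, within each PWM period, on a time-set of length $\bigO(\varepsilon)$ (near the switching instants) where the integrand, though only bounded rather than small, contributes $\bigO(\varepsilon)\cdot\bigO(\varepsilon^2) = \bigO(\varepsilon^3)$ per period, hence $\bigO(\varepsilon^3)\cdot(\text{number of periods}) = \bigO(\varepsilon^3\sigma)$ over $[0,\sigma]$; the "good" part is genuinely $\Oinf{3}$ and hence a fortiori $\Oinf{3}$ in average.

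The main obstacle is exactly this regularity bookkeeping: making precise, and rigorously bounding, the integrals of the $s_0$- and $\partial_v s_1$-dependent terms over the switching neighborhoods, and checking that every place where the classical proof of \cite[chapter 2]{SandeVM2005book} invokes a Lipschitz bound on the perturbation can be replaced by "slowly-varying in average" for the $\varepsilon^2$ part and "$\Oinf{3}$ in average" for the remainder — without this weakening propagating into a worse final error than $\Oinf{2}$. Once the decomposition~\eqref{eq:system_lemma_1} is in hand with these two properties, Lemma~\ref{lemma:estimation} (Gronwall-type, tolerant of the in-average hypotheses) does the rest. I would organize the write-up as: (i) the algebraic computation of the transformed vector field and the order-$\varepsilon$ cancellation via $s_1$; (ii) identification and periodicity/zero-mean normalization of $\Phi$; (iii) the "slowly-varying in average" estimate for $\Phi$ and the "$\Oinf{3}$ in average" estimate for $\phi$, both reduced to the switching-neighborhood measure argument for $s_0$ and $s_1$.
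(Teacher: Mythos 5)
Your outline has the right ingredients (near-identity transformation, cancellation of the order-$\varepsilon$ oscillation via $s_1$, the two in-average notions, the switching-set measure argument), but the central step is not well-defined as you state it, and the fix is exactly the point of the paper's construction. You propose to expand $F(\widetilde X+\varepsilon W)=F(\widetilde X)+\varepsilon\,\partial_{\widetilde X}F\cdot W+(\text{remainder})$ and to control the remainder in average. But $F$ contains $\mathcal{M}\bigl(\alpha(\eta,\overline H(x,\eta,\sigma,\tau),\tau),\sigma\bigr)$, i.e.\ $s_0$ composed with a function of $X$, and $s_0$ is a sum of sign functions of its first argument: $\partial_{\widetilde X}F\cdot W$ contains $\partial_u s_0$, which is a sum of Dirac masses, so your $\Phi$ is not a function and the decomposition collapses. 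The paper never differentiates $s_0$ in its first argument. Instead it exploits the structural fact that the feedback is deliberately written with $\overline H$, so that after substituting $x=\widetilde x+\varepsilon g(\widetilde x)s_1^\alpha$ the first-order perturbations cancel and the argument of $s_0$ moves only by $\varepsilon^2 K_\alpha$; the exact (non-Taylor-expanded) difference $\Delta s_0^\alpha=s_0(\alpha+\varepsilon^2K_\alpha,\sigma)-s_0(\alpha,\sigma)$ is then isolated into $\phi$ and bounded using slowly-varying-in-average with $k=2$, giving $\int_0^\sigma\norm{\varepsilon G\Delta s_0^\alpha}\le c\,\sigma\varepsilon^3$. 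This order-$\varepsilon^2$ perturbation of the argument is what makes your own numerology ($\text{bad-set measure}\times\text{integrand}$ per period $=\bigO(\varepsilon^3)$) come out right; with the generic $\Oinf{}$ perturbation your expansion suggests, the same mechanism yields only $\Oinf{2}$ in average, which is not enough. Your write-up never identifies where $\overline H$ (as opposed to $H$) enters, so this cancellation — without which the lemma fails — is missing.

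A secondary problem: your plan to make $\Phi$ zero-mean by "subtracting its mean and folding it into $\overline F$" would not be harmless. A non-zero mean of $\Phi$ produces a drift $\varepsilon^2\overline\Phi$ which, over the timescale $\sigma\sim 1/\varepsilon$, shifts the trajectory by $\Oinf{1}$ relative to the solution of~\eqref{eq:averagedSystem}, destroying the second-order estimate of Lemma~\ref{lemma:estimation} and hence of Theorem~\ref{theorem:main} (which compares to the \emph{original} averaged system~\eqref{eq:claveraged}). One must instead verify that the $\Phi$ actually produced is zero-mean, which it is: every term of~\eqref{eq:psi} retained in $\Phi$ is a smooth function of $(\widetilde X,\varepsilon\sigma)$ multiplying one of $s_1^\alpha$, $\partial_1 s_1^\alpha$, $\partial_3 s_1^\alpha$ (zero-mean because $s_1$ is zero-mean for every value of its first argument) or $s_0^\alpha s_1^\alpha=\tfrac12\partial_\sigma\bigl((s_1^\alpha)^2\bigr)$ (zero-mean as the $\sigma$-derivative of a periodic function). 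Fixing these two points essentially forces you onto the paper's route: compute $dX/d\sigma$ both by substitution and by differentiating~\eqref{eq:transformation}, keep $s_0^{\alpha,+}$ exact, equate, and read off $\Phi$ and $\phi=\varepsilon^2\Psi_1+\varepsilon G\Delta s_0^\alpha$.
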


%%% PROOF OF LEMMA TRANSFORMATION COMMENTED
\begin{proof}
	To determine the expression for $d\widetilde X/d\sigma$, the objective is to compute $dX/d\sigma$ as a function of $\widetilde X$ with two different methods. On the one hand we replace $X$ with its transformation~\eqref{eq:transformation} in the closed-loop system~\eqref{eq:system_proof}, and on the other hand we differentiate~\eqref{eq:transformation} with respect to $\sigma$.

	We first compute $s_0(\alpha(\eta,\overline H(x,\eta,\sigma,\varepsilon\sigma),\varepsilon\sigma),\sigma)$ as a function of $\widetilde X = (\widetilde x,\widetilde \eta)$. Exactly as in \eqref{eq:HbarHxbar}, with $(\widetilde x, \widetilde \eta)$ replacing $(\overline x, \overline \eta)$, and \eqref{eq:inversion_x_tilde} replacing \eqref{eq:inverse_xbar_etabar}, we have
\begin{IEEEeqnarray*}{rCl}
	\overline H(x,\eta,\sigma,\varepsilon\sigma) &=& H(\widetilde x) + \Oinf{2}.
\end{IEEEeqnarray*}
Therefore, by Taylor expansion
\begin{IEEEeqnarray*}{rCl}
	\alpha(\eta,\overline H(x,\eta,\sigma,\varepsilon\sigma), \varepsilon\sigma) &=& \alpha(\widetilde \eta, H(\widetilde x),\varepsilon\sigma) \\
										     &&+\, \varepsilon^2 K_\alpha (\widetilde X, \sigma),
\end{IEEEeqnarray*}
with $K_\alpha$ bounded. The lack of regularity of $s_0$ prevents further Taylor expansion; nonetheless, we still can write 
\begin{multline*}
	s_0\bigl(\alpha(\eta, \overline H(x,\eta,\sigma,\varepsilon\sigma),\varepsilon \sigma),\sigma\bigr) = \\
	\hspace{2cm} s_0\bigl(\alpha(\widetilde\eta, H(\widetilde x), \varepsilon\sigma) + \varepsilon^2 K_\alpha(\widetilde X,\sigma),\sigma\bigr). 
\end{multline*}
Finally, inserting \eqref{eq:transformation} into \eqref{eq:system_proof} and Taylor expanding, yields after tedious but straightforward computations, 
\begin{IEEEeqnarray}{rCl}
	\label{eq:dXdsigma2}
	\frac{dX}{d\sigma} &=& \varepsilon \overline F (\widetilde X,\varepsilon\sigma) + \varepsilon G(\widetilde X) s_0^{\alpha,+} (\tdot) \notag \\
			   &&+\, \varepsilon^2 \overline F (\widetilde X,\varepsilon\sigma)G(\widetilde X) s_1^\alpha(\tdot) \notag \\
			   &&+\, \varepsilon^2 G'(\widetilde X) G(\widetilde X) s_1^\alpha (\tdot) s_0^{\alpha,+} (\tdot) + \Oinf{3}; \IEEEeqnarraynumspace 
\end{IEEEeqnarray}
we have introduced the following notations
\begin{IEEEeqnarray*}{rCl}
	\label{eq:delta_s0}
	(\tdot) &:=& (\widetilde X,\sigma,\varepsilon\sigma) \notag \\
	s_i^{\alpha} \bigl(\tdot \bigr) &:=& s_i\bigl(\alpha(\widetilde\eta,H(\widetilde x),\varepsilon\sigma), \sigma\bigr), \notag \\
	s_0^{\alpha,+} \bigl(\tdot\bigr) &:=& s_0\bigl(\alpha(\widetilde\eta,H(\widetilde x),\varepsilon\sigma) + \varepsilon^2 K_\alpha(\widetilde X, \sigma),\sigma\bigr) \notag \\
	\Delta s_0^\alpha (\tdot) &:=& s_0^{\alpha,+}(\tdot) - s_0 (\tdot) \\
	G(X) &:=& \begin{pmatrix}
		g(x) \\  
		0
	\end{pmatrix} \\
	\overline F(X,\varepsilon\sigma) &:=& \begin{pmatrix}
f(x)+g(x)\alpha\bigl(\eta,H(x),\varepsilon\sigma\bigr) \\
			a\bigl(\eta, H(x),\varepsilon\sigma\bigr)
	\end{pmatrix}.
\end{IEEEeqnarray*}

We now time-differentiate \eqref{eq:transformation}, which reads with the previous notations
\begin{IEEEeqnarray*}{rCl}
	X &=& \widetilde X + \varepsilon G(\widetilde X) s_1^\alpha (\tdot).
\end{IEEEeqnarray*}
This yields
\begin{IEEEeqnarray}{rCl}
	\label{eq:dXdsigmaDiff}
	\frac{dX}{d\sigma} &=& \frac{d\widetilde X}{d\sigma} + \varepsilon G'(\widetilde X) \frac{d\widetilde X}{d\sigma} s_1^\alpha (\tdot) + \varepsilon G(\widetilde X)\partial_1 s_1^\alpha (\tdot) \frac{d\widetilde X}{d\sigma} \notag \\
			   &&+\, \varepsilon G(\widetilde X) s_0^\alpha (\tdot) + \varepsilon^2 G(\widetilde X) \partial_3 s_1^\alpha (\tdot), \IEEEeqnarraynumspace
\end{IEEEeqnarray}
since $\partial_2 s_1^\alpha = s_0^\alpha$. Now assume $\widetilde X$ satisfies 
\begin{IEEEeqnarray}{rCl}
	\label{eq:dXdsigmaClaim}
	\frac{d\widetilde X}{d\sigma} &=& \varepsilon \overline F (\widetilde X,\varepsilon\sigma) + \varepsilon G(\widetilde X) \Delta s_0^\alpha (\tdot) + \varepsilon^2 \Psi (\tdot),
\end{IEEEeqnarray} 
where $\Psi(\tdot)$ is yet to be computed. Inserting \eqref{eq:dXdsigmaClaim} into \eqref{eq:dXdsigmaDiff},  
\begin{IEEEeqnarray}{rCl}
	\label{eq:dXdsigma1}
	\frac{dX}{d\sigma} &=& \varepsilon \overline F (\widetilde X,\varepsilon\sigma) + \varepsilon G(\widetilde X) \Delta s_0^\alpha (\tdot) + \varepsilon^2 \Psi(\tdot) \notag \\
			   &&+\, \varepsilon^2 G'(\widetilde X) \overline F (\widetilde X,\varepsilon\sigma) s_1^\alpha(\tdot)\notag  \\
			   &&+\, \varepsilon^2 G'(\widetilde X) G(\widetilde X) \Delta s_0^\alpha (\tdot) s_1^\alpha (\tdot) \notag \\
			   &&+\, \varepsilon^2 G(\widetilde X) \partial_1 s_1^\alpha (\tdot) \overline F (\widetilde X,\varepsilon\sigma) \notag \\
			   &&+\, \varepsilon^2 G(\widetilde X) \partial_1 s_1^\alpha (\tdot) G(\widetilde X) \Delta s_0^\alpha(\tdot) \notag \\
			   &&+\, \varepsilon G(\widetilde X) s_0^\alpha (\tdot) \notag \\
			   &&+\, \varepsilon^2 G(\widetilde X) \partial_3 s_1^\alpha (\tdot) \notag \\
			   &&+\, \Oinf{3}.
\end{IEEEeqnarray}

Next,  equating \eqref{eq:dXdsigma1} and \eqref{eq:dXdsigma2}, $\Psi$ satisfies
\begin{IEEEeqnarray}{rCl}
	\label{eq:psi}
	\Psi (\tdot) &=& [\overline F,G](\widetilde X,\varepsilon\sigma)s_1^\alpha (\tdot) + G'(\widetilde X) G(\widetilde X) s_0^\alpha(\tdot) s_1^\alpha (\tdot) \notag \\
		     &&-\, G(\widetilde X) \partial_1 s_1^\alpha (\tdot) \overline F (\widetilde X,\varepsilon\sigma) -\, G(\widetilde X) \partial_3 s_1^\alpha (\tdot)  \notag\\
		     &&-\, G(\widetilde X) \partial_1 s_1^\alpha (\tdot) G(\widetilde X) \Delta s_0^\alpha(\tdot).
\end{IEEEeqnarray}
This gives the expressions of $\Phi$ and $\phi$ in \eqref{eq:system_lemma_1},
\begin{IEEEeqnarray*}{rCl}
\Phi (\tdot) &:=& [\overline F,G](\widetilde X,\varepsilon\sigma)s_1^\alpha (\tdot) + G'(\widetilde X) G(\widetilde X) s_0^\alpha(\tdot) s_1^\alpha (\tdot) \notag \\
		     &&-\, G(\widetilde X) \partial_1 s_1^\alpha (\tdot) \overline F (\widetilde X,\varepsilon\sigma) -\, G(\widetilde X) \partial_3 s_1^\alpha (\tdot),  \notag\\
\phi(\tdot) &:=& \varepsilon^2 \Psi_1(\tdot) + \varepsilon G(\widetilde X) \Delta s_0^\alpha,
\end{IEEEeqnarray*}
with
\begin{IEEEeqnarray*}{rCl}
	%\label{eq:psi_1}
	\Psi_1(\tdot) := -G(\widetilde X)\partial_1s_1^\alpha (\tdot) G(\widetilde X)\Delta s_0^\alpha (\tdot).
\end{IEEEeqnarray*}

The last step is to check that $\Phi$ and $\phi$ satisfy the assumptions of the lemma. Since $s_0^\alpha$, $s_1^\alpha$, $\partial_1 s_1^\alpha$ and $\partial_3 s_1^\alpha$ are periodic and zero-mean in the second argument, and slowly-varying in average,  so is $\Phi$. There remains to prove that $\phi = \Oinf{3}$ in average. Since $\Delta s_0^\alpha$ is slowly-varying in average, 
\begin{IEEEeqnarray*}{rCl}
	\label{eq:hyp_delta_s0}
	\int_0^\sigma \norm{\Delta s_0^\alpha (\tdot(s))}ds  \leq \lambda_0\sigma\varepsilon^2.
\end{IEEEeqnarray*}
with $\lambda_0>0$. $G$ being bounded by a constant $c_g$, this implies 
\begin{IEEEeqnarray*}{rCl}
	\norm{\int_0^\sigma \varepsilon G(\widetilde X(s)) \Delta s_0^\alpha(\tdot(s)) \, ds } \leq c_g \lambda_0 \sigma \varepsilon^3. 
\end{IEEEeqnarray*}
Similarly, $\partial_1 s_1$ being bounded by $c_{11}$, $\Psi_1$ satisfies 
\begin{IEEEeqnarray*}{rCl}
	\norm{\int_0^\sigma \varepsilon^2\Psi_1 (\tdot(s)) \, ds} \leq c_g^2 c_{11} \lambda_0 \sigma \varepsilon_0 \varepsilon^3.
\end{IEEEeqnarray*}
Summing the two previous inequalities yields 
\begin{IEEEeqnarray*}{rCl}
	\norm{\int_0^\sigma \phi(\tdot(s)) \,ds} \leq \lambda_0 c_g(1+c_{11}c_g\varepsilon_0)\sigma\varepsilon^3, 
\end{IEEEeqnarray*}
which concludes the proof. 
\end{proof}
%%% END PROOF TRANSFORMATION

%The second-order averaging theorem is used to relate the solution $\overline X$ of the non-perturbed system \eqref{eq:claveraged} and $\widetilde X$. They respectively satisfy
%\begin{subequations}
%	\label{eq:XbarXtilde}
%\begin{IEEEeqnarray}{rCl}
%	\label{subeq:Xbar}
%	\frac{d\overline{X}}{d\sigma} &=& \varepsilon F(\overline{X}),\\
%	\label{subeq:Xtilde}
%	\frac{d\widetilde{X}}{d\sigma} &=& \varepsilon F(\widetilde{X}) + \varepsilon^2 \Phi(\tdot) + \varepsilon \phi(\tdot) + \Oinf{3}\IEEEeqnarraynumspace 
%\end{IEEEeqnarray}
%\end{subequations}
%with the same initial condition. 
\begin{lemma}
	\label{lemma:estimation}
	Let $\overline X(\sigma)$ and $\widetilde X(\sigma)$ be respectively the solutions of \eqref{eq:averagedSystem} and~\eqref{eq:system_lemma_1} starting at $0$ from the same initial conditions. Then, for all $\sigma\ge0$
			\begin{IEEEeqnarray*}{rCl}
				\widetilde X(\sigma) &=& \overline X(\sigma) + \Oinf{2}.
			\end{IEEEeqnarray*}
\end{lemma}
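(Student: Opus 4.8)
The plan is to follow the classical second-order averaging estimate of \cite[Chapter~2]{SandeVM2005book}, but to carry the weaker hypotheses (slowly-varying in average instead of Lipschitz, $\Oinf{3}$ in average instead of $\Oinf{3}$) through the Gronwall argument. Write $E(\sigma):=\widetilde X(\sigma)-\overline X(\sigma)$, so that $E(0)=0$ and, subtracting \eqref{eq:averagedSystem} from \eqref{eq:system_lemma_1},
\begin{IEEEeqnarray*}{rCl}
	\frac{dE}{d\sigma} &=& \varepsilon\bigl(\overline F(\widetilde X,\varepsilon\sigma)-\overline F(\overline X,\varepsilon\sigma)\bigr) + \varepsilon^2\Phi(\widetilde X,\sigma,\varepsilon\sigma) + \phi(\widetilde X,\sigma,\varepsilon\sigma),
\end{IEEEeqnarray*}
and integrate from $0$ to $\sigma$. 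The first term is handled by Lipschitz continuity of $\overline F$ (inherited from $f,g,h,\alpha,a$), contributing $\varepsilon L\int_0^\sigma\norm{E}$. The last term is $\Oinf{3}$ in average by Lemma~\ref{lemma:transformation}, so its integral is bounded by $K\varepsilon^3\sigma$, which is $\Oinf{2}$ on the timescale $\sigma\le c/\varepsilon$. The middle term is the only delicate one.

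For the $\varepsilon^2\Phi$ term I would introduce the zero-mean primitive $\Phi_1$ of $\Phi$ in the second argument and integrate by parts exactly as in the standard proof: $\int_0^\sigma\Phi(\widetilde X(s),s,\varepsilon s)\,ds$ becomes $\Phi_1(\widetilde X(\sigma),\sigma,\varepsilon\sigma)-\Phi_1(\widetilde X(0),0,0)$ minus integral terms involving $\partial_1\Phi_1\cdot\frac{d\widetilde X}{d\sigma}=\bigO(\varepsilon)$ and $\partial_3\Phi_1\cdot\varepsilon$. Since $\Phi$ is bounded and periodic, $\Phi_1$ is bounded, so the boundary terms are $\bigO(1)$ and, after the prefactor $\varepsilon^2$, contribute $\Oinf{2}$; the remaining integrals carry an extra $\varepsilon$ and are therefore $\Oinf{3}\cdot\sigma=\Oinf{2}$ on the timescale. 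The subtlety is that $\partial_1\Phi_1$ need not exist because $s_0$ — hence $s_1$ through $\partial_1 s_1$, which appears in $\Phi$ — is only piecewise $C^1$ in the first argument. Here the hypothesis that $\Phi$ is \emph{slowly-varying in average} is exactly what replaces the missing derivative: one writes $\Phi(\widetilde X(s),s,\varepsilon s)=\Phi\bigl(\overline X(s)+\bigO(\varepsilon),s,\varepsilon s\bigr)$ and uses the defining inequality of ``slowly-varying in average'' (with $p=\overline X$, $q$ the bounded remainder, $k$ appropriate) to bound $\int_0^\sigma\norm{\Phi(\widetilde X,s,\varepsilon s)-\Phi(\overline X,s,\varepsilon s)}\,ds$ by $\lambda\sigma\varepsilon$, after which the $\Phi(\overline X,\cdot,\cdot)$ piece is genuinely amenable to the integration-by-parts argument since $\overline X$ is $C^1$ with $\frac{d\overline X}{d\sigma}=\bigO(\varepsilon)$. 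Strictly, this last substitution presupposes $E=\bigO(\varepsilon)$, so the clean way is to run a bootstrap/continuity argument: assume $\norm{E}\le C\varepsilon$ on a maximal subinterval, derive the sharper $\norm{E}\le C'\varepsilon^2$ from the estimates above, and conclude $C'\varepsilon^2<C\varepsilon$ for $\varepsilon$ small, so the subinterval is the whole timescale.

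Assembling the pieces, Gronwall's inequality applied to $\norm{E(\sigma)}\le \varepsilon L\int_0^\sigma\norm{E}\,ds + K'\varepsilon^2$ (for $\sigma$ in the timescale $\bigO(1/\varepsilon)$) gives $\norm{E(\sigma)}\le K'\varepsilon^2 e^{\varepsilon L\sigma}\le K''\varepsilon^2$, which is the claimed $\widetilde X(\sigma)=\overline X(\sigma)+\Oinf{2}$. As flagged in the remarks following the theorem, extension from the timescale $1/\varepsilon$ to all $\sigma\ge0$ uses the exponential stability of \eqref{eq:claveraged} as in \cite[Appendix]{CombeJMMR2016ACC}, and reinstating the slow-time argument $\varepsilon\sigma$ is routine as in \cite[section~3.3]{SandeVM2005book}. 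The main obstacle, and the only place the proof departs from the textbook, is precisely the integration-by-parts step for $\varepsilon^2\Phi$: making rigorous the replacement of the nonexistent $\partial_1\Phi_1$ by the slowly-varying-in-average estimate, and threading that through a bootstrap so that the $\bigO(\varepsilon)$ used inside the estimate is itself justified.
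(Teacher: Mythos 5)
Your overall architecture is the right one and matches the paper's for two of the three terms: the Lipschitz estimate on $\varepsilon\bigl(\overline F(\widetilde X)-\overline F(\overline X)\bigr)$, the bound $K\varepsilon^3\sigma$ on $\int_0^\sigma\phi$ from the ``$\Oinf{3}$ in average'' property, and the final Gronwall step are exactly what the paper does. The gap is in your treatment of the $\varepsilon^2\Phi$ term, which you yourself flag as the delicate one. Your plan is integration by parts against the zero-mean primitive $\Phi_1$, and your fix for the non-existence of $\partial_1\Phi_1$ is to first swap $\widetilde X$ for $\overline X$ (via slowly-varying-in-average plus a bootstrap) and then integrate by parts along the $C^1$ curve $\overline X(s)$. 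This fix does not close the gap: the derivative that fails to exist is the partial derivative of $\Phi_1$ with respect to its \emph{state} slot ($\Phi$ contains $s_0^\alpha$ and $\partial_1 s_1^\alpha$, which are only piecewise continuous in the state, and integrating in $\sigma$ does not obviously restore differentiability of $\Phi_1$ in $X$). Making the trajectory smoother does not make $\frac{d}{ds}\Phi_1\bigl(\overline X(s),s,\varepsilon s\bigr)$ exist, so the chain-rule term $\partial_1\Phi_1\cdot\frac{d\overline X}{ds}$ you need for the integration by parts is still undefined. The slowly-varying-in-average hypothesis is not a substitute for that chain-rule term; it is an integrated, finite-difference estimate over one period.

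What the paper does instead is abandon integration by parts entirely and prove an extension of Besjes' lemma (Lemma~\ref{lemma:besjes}): partition $[0,\sigma]$ into $m\le L/(\varepsilon T)$ periods, use periodicity and zero mean to kill $\int_{(i-1)T}^{iT}\Phi\bigl(\widetilde X((i-1)T),s\bigr)\,ds$ exactly, and bound the correction $\int_{(i-1)T}^{iT}\norm{\Phi(\widetilde X(s),s)-\Phi(\widetilde X((i-1)T),s)}\,ds\le\lambda T\varepsilon$ by the slowly-varying-in-average property with $p$ frozen and $\varepsilon q(s)=\int_{(i-1)T}^{s}\dot{\widetilde X}$; summing gives $\norm{\int_0^\sigma\Phi}\le\lambda L+c_2$, a constant, hence $\varepsilon^2\int_0^\sigma\Phi=\Oinf{2}$. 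This is precisely the setting the ``slowly-varying in average'' definition was built for, it is applied directly along $\widetilde X$, and it needs no primitive $\Phi_1$, no comparison with $\overline X$, and no bootstrap. To repair your proof, replace the integration-by-parts step by this Besjes-type splitting; the rest of your argument then goes through as written.
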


\begin{proof}
Let $E(\sigma) := \widetilde{X}(\sigma) - \overline{X}(\sigma)$. Then,
\begin{IEEEeqnarray*}{rCl}
	E(\sigma) &=& \int_{0}^\sigma \Bigl[\frac{d\widetilde{X}}{d\sigma}(s) - \frac{d\overline{X}}{d\sigma}(s)\Bigr] \, ds \\
		  &=& \varepsilon \int_{0}^\sigma \bigl[F(\widetilde{X}(s)) - F(\overline{X}(s)) \bigr] \, ds  \\
		  &&+\, \varepsilon^2 \int_{0}^\sigma \Phi(\tdot(s))  \, ds + \int_{0}^\sigma \phi(\tdot(s)) \, ds
\end{IEEEeqnarray*}
As $F$ is Lipschitz with constant $\lambda_F$,
\begin{IEEEeqnarray*}{rCl}
	\varepsilon\int_{0}^\sigma \norm{F(\widetilde X(s)) - F(\overline X(s)) }  ds \leq \varepsilon \lambda_F \int_{0}^\sigma \norm{E(s)}\, ds. 
\end{IEEEeqnarray*}
On the other hand, there exists by lemma~\ref{lemma:besjes} $c_1$ such that 
	\begin{IEEEeqnarray*}{rCl}
		\varepsilon^2\norm{\int_{0}^\sigma \Phi(\tdot(s)) \, ds} \leq c_1\varepsilon^2
	\end{IEEEeqnarray*}
	Finally, as $\phi$ is $\Oinf{3}$ in average, there exists $c_2$ such that
	\begin{IEEEeqnarray*}{rCl}
		\norm{\int_{0}^\sigma \phi(\tdot(s)) \, ds} \leq c_2\varepsilon^3 \sigma.
	\end{IEEEeqnarray*}
	The summation of these estimations yields 
		\begin{IEEEeqnarray*}{rCl}
			\norm{E(\sigma)} \leq \varepsilon \lambda_F \int_{0}^\sigma \norm{E(s)}\, ds + c_1 \varepsilon^2 + c_2\varepsilon^3 \sigma. 
		\end{IEEEeqnarray*}
	Then by Gronwall's lemma \cite[Lemma 1.3.3]{SandeVM2005book}
\begin{IEEEeqnarray*}{rCl}
	\norm{E(\sigma)} \leq \left(\frac{c_2}{\lambda_F} + c_1 \right) e^{\lambda_F \sigma} \varepsilon^2,
\end{IEEEeqnarray*}
which means $\widetilde X = \overline X + \Oinf{2}$. 
\end{proof}

The following lemma is an extension of Besjes' lemma \cite[Lemma 2.8.2]{SandeVM2005book} when $\varphi$ is no longer Lipschitz, but only slowly-varying in average. 

%A recent proof of the traditional second-order averaging theorem, as presented in \cite[Lemma 2.8.2]{SandeVM2005book}, is based on Besjes' lemma. This lemma provides an upper bound for the integral of a periodic and Lipschitz function. Here we present an adaptation of this result for a discontinuous function, replacing the Lipschitz hypothesis by a weaker one. 
\begin{lemma}
	\label{lemma:besjes}
	Assume $\varphi (X,\sigma)$ is $T$-periodic and zero-mean in the second argument, bounded, and slowly-varying in average.  
	Assume the solution $X(\sigma)$ of $\dot X = \Oinf{}$ is defined for $0 \leq \sigma \leq L/\varepsilon$. There exists $c_1 > 0$ such that 
	\begin{IEEEeqnarray*}{rCl}
		\norm{\int_0^\sigma \varphi (X(s),s) \, ds} \leq c_1. 
	\end{IEEEeqnarray*}
\end{lemma}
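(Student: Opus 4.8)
The plan is to follow the classical partition--into--periods proof of Besjes' lemma (as in \cite[Lemma~2.8.2]{SandeVM2005book}), replacing only the single step that invokes a Lipschitz constant of $\varphi$ by an appeal to the slowly-varying-in-average hypothesis. First I would fix $\sigma$ with $0\le\sigma\le L/\varepsilon$, set $m:=\lfloor\sigma/T\rfloor$, and split $[0,\sigma]$ into the $m$ consecutive periods $[jT,(j+1)T]$, $j=0,\dots,m-1$, together with the leftover interval $[mT,\sigma]$, whose length is less than $T$. On each full period I would add and subtract the integrand frozen at the left endpoint $X(jT)$:
\begin{IEEEeqnarray*}{rCl}
\int_{jT}^{(j+1)T}\varphi\bigl(X(s),s\bigr)\,ds
&=& \int_{jT}^{(j+1)T}\varphi\bigl(X(jT),s\bigr)\,ds\\
&&+\,\int_{jT}^{(j+1)T}\bigl[\varphi\bigl(X(s),s\bigr)-\varphi\bigl(X(jT),s\bigr)\bigr]\,ds.
\end{IEEEeqnarray*}
The first integral vanishes: for the frozen value $v=X(jT)$, $T$-periodicity gives $\int_{jT}^{(j+1)T}\varphi(v,s)\,ds=\int_0^T\varphi(v,s)\,ds$, which is zero since $\varphi$ is zero-mean in its second argument.

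It then remains to estimate the second integral. Since $X$ is continuous with $\dot X=\Oinf{}$, there is $c_0>0$ with $\norm{X(s)-X(jT)}\le c_0\varepsilon T$ for $s\in[jT,(j+1)T]$; writing $X(s)=X(jT)+\varepsilon q_j(s)$ with $q_j:=\varepsilon^{-1}\bigl(X(\cdot)-X(jT)\bigr)$, the function $q_j$ is continuous and bounded by $c_0T$. Applying the definition of slowly-varying in average on $[jT,(j+1)T]$, with the constant function $p\equiv X(jT)$ as slow part and exponent $1$, I would get a constant $\lambda>0$ --- depending on $T$ and on the uniform bound $c_0T$, but not on $\varepsilon$, on the base point $jT$, or on $j$ --- such that $\int_{jT}^{(j+1)T}\norm{\varphi(X(s),s)-\varphi(X(jT),s)}\,ds\le\lambda T\varepsilon$. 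Summing over $j=0,\dots,m-1$ and using $mT\le\sigma\le L/\varepsilon$ gives $\norm{\int_0^{mT}\varphi(X(s),s)\,ds}\le m\lambda T\varepsilon\le\lambda L$. On the leftover interval, boundedness of $\varphi$ by some $c_\varphi$ gives $\norm{\int_{mT}^\sigma\varphi(X(s),s)\,ds}\le c_\varphi T$. Adding the two contributions yields $\norm{\int_0^\sigma\varphi(X(s),s)\,ds}\le\lambda L+c_\varphi T=:c_1$, independent of both $\sigma$ and $\varepsilon$, as claimed.

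Everything except one step is routine Besjes bookkeeping; the delicate point is the passage from the frozen to the non-frozen integrand, i.e. checking that the slowly-varying-in-average hypothesis is legitimately applicable period by period. Concretely: that $X(jT)$ may play the role of the slow part $p(\varepsilon\sigma)$, that $X(s)-X(jT)$ is $\varepsilon$ times a continuous function bounded (by $c_0T$) uniformly over all periods, and --- most importantly --- that a single constant $\lambda$ then serves every $j$, since otherwise the sum of the $m\sim L/(\varepsilon T)$ error terms would not stay bounded. This uniformity is exactly why the definition is stated with $a$ and $T$ arbitrary but $\lambda$ fixed; it is also the only place where the argument genuinely departs from the Lipschitz case, in which $\norm{\varphi(X(s),s)-\varphi(X(jT),s)}\le L_\varphi c_0\varepsilon T$ pointwise would give the same $\lambda T\varepsilon$ bound after integration.
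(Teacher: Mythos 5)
Your proof is correct and follows essentially the same route as the paper's: partition $[0,\sigma]$ into periods, kill the frozen integrand by periodicity and zero mean, bound the per-period discrepancy via the slowly-varying-in-average hypothesis with $q_j=\varepsilon^{-1}(X(\cdot)-X(jT))$ bounded uniformly thanks to $\dot X=\Oinf{}$, and bound the leftover interval by the boundedness of $\varphi$. Your explicit remark about the uniformity of $\lambda$ over the $m\sim L/(\varepsilon T)$ subintervals is a point the paper leaves implicit, and it is indeed the crux of why the definition must hold with a single $\lambda$ for arbitrary $a$.
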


\begin{proof}
	Along the lines of \cite{SandeVM2005book}, we divide the interval $[0,t]$ in $m$ subintervals $[0,T],\ldots,[(m-1)T,mT]$ and a remainder $[mT,t]$. By splitting the integral on those intervals, we write
	\begin{IEEEeqnarray*}{rCl}
		\int_0^\sigma \varphi(x(s),s) \, ds &=& \sum_{i=0}^m \int_{(i-1)T}^{iT} \varphi(x((i-1)T),s)\, ds  \\
					       &&\hspace{-2cm}+\sum_{i=0}^m \int_{(i-1)T}^{iT} \bigr[\varphi(x(s),s) - \varphi(x((i-1)T),s) \bigl] \, ds \\
					       && \hspace{-2cm}+ \int_{mT}^\sigma \varphi(x(s),s) \, ds,
	\end{IEEEeqnarray*}
	where each of the integral in the first sum are zero as $\varphi$ is periodic with zero mean. Since $\varphi$ is bounded, the remainder is also bounded by a constant $c_2 > 0$. Besides
	\begin{IEEEeqnarray*}{rCl}
		x(s) &=& x((i-1)T) + \int_{(i-1)T}^s \dot x (\tau) \, d\tau  \\
		     &=& x((i-1)T) + \varepsilon q(s),
	\end{IEEEeqnarray*}
	with $q$ continuous and bounded. By hypothesis, there exists $\lambda > 0$ such that for $0 \leq i \leq m$,
\begin{IEEEeqnarray*}{rCl}
	\int_{(i-1)T}^{iT} \norm{\varphi(x(s),s) - \varphi(x((i-1)T),s)} \, ds  \leq  \lambda T \varepsilon 
\end{IEEEeqnarray*}
Therefore by summing the previous estimations,
\begin{IEEEeqnarray*}{rCl}
	\norm{\int_0^\sigma \varphi(x(s),s) \, ds}  \leq m \lambda T\varepsilon + c_2,
\end{IEEEeqnarray*}
with $mT \leq t \leq L/\varepsilon$, consequently $m\lambda T \varepsilon +c_2 \leq \lambda L + c_2$; which concludes the proof. 
\end{proof}

\section{Demodulation}\label{sec:demodulation}
From~\eqref{subeq:y}, we can write the measured signal~$y$ as
\begin{IEEEeqnarray*}{rCl}	
	y(t) &=& y_a(t)+ y_v(t)s_1\bigl(u(t),\tfrac{t}{\eps}\bigr)+\Oinf{2},
\end{IEEEeqnarray*}
where the signal $u$ feeding the PWM encoder is known. The following result shows $y_a$ and $y_v$ can be estimated from~$y$, for use in a control law as described in section~\ref{sec:mainResult}. 
%Notice that by construction $y_a$ and $y_v$ are as smooth as needed, but $u$ and~$s_1$ are only piecewise~$C^1$, DONE IN REMARK 2

\begin{theorem}
	Consider the estimators $\widehat y_a$ and $\widehat y_v$ defined by
	\begin{IEEEeqnarray*}{rCl}
		\widehat y_a(t) &:=& \frac{3}{2} M(y)(t) - \frac{1}{2}M(y)(t-\varepsilon)\\\\
		k_\Delta(\tau) &:=& \bigl(y(\tau)-\widehat y_a(\tau)\bigr)s_1\bigl(u(\tau),\tfrac{\tau}{\varepsilon}\bigr)\\
		\widehat y_v(t) &:=& \frac{M(k_\Delta)(t)}{\overline{s_1^2}\bigl(u(t)\bigr)},
	\end{IEEEeqnarray*}
	where $M:y\mapsto \varepsilon^{-1}\int_0^\varepsilon y(\tau)d\tau$ is the moving average operator, and $\overline{s_1^2}$ the mean of $s_1^2$ in the second argument (cf end of section~\ref{sec:introduction}). Then,
\begin{subequations}	
	\begin{IEEEeqnarray}{rCl}
		\widehat y_a(t) &=& y_a(t) + \Oinf{2} \label{eq:estya}\\
		\widehat y_v(t) &=& y_v(t) + \Oinf{2}. \label{eq:estyv}
	\end{IEEEeqnarray}
\end{subequations}
\end{theorem}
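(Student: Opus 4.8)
The strategy is to substitute the expansion $y(t) = y_a(t) + y_v(t)\,s_1\bigl(u(t),\tfrac{t}{\varepsilon}\bigr) + \Oinf{2}$ into each estimator and track the moving averages, using that $y_a$, $y_v$ and $u$ are piecewise $C^1$ (hence slowly varying over one PWM period) while $s_1$ oscillates. The key elementary fact is that for a piecewise-$C^1$ signal $k$, $M(k)(t) = k(t) - \tfrac{\varepsilon}{2}\dot k(t) + \Oinf{2}$, obtained by Taylor expanding $k$ inside the integral $\tfrac{1}{\varepsilon}\int_{t-\varepsilon}^t k(\tau)\,d\tau$; the second estimator uses the extrapolation combination $\tfrac{3}{2}M(y)(t) - \tfrac12 M(y)(t-\varepsilon)$ precisely to cancel that first-order $\Oinf{1}$ term.

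\textbf{Step 1 (estimate of $y_a$).} Apply $M$ to $y$. Since $M$ is linear, $M(y)(t) = M(y_a)(t) + M\bigl(y_v\,s_1(u,\tfrac{\cdot}{\varepsilon})\bigr)(t) + \Oinf{2}$. For the first term, $M(y_a)(t) = y_a(t) - \tfrac{\varepsilon}{2}\dot y_a(t) + \Oinf{2}$. For the second term, write $y_v(\tau)\,s_1\bigl(u(\tau),\tfrac{\tau}{\varepsilon}\bigr) = y_v(t)\,s_1\bigl(u(t),\tfrac{\tau}{\varepsilon}\bigr) + \Oinf{1}$ (freezing the slow arguments at $t$, valid because $y_v$ and $u$ are piecewise $C^1$ and the error in the slow arguments is $\Oinf{1}$, while $\partial_1 s_1$ is bounded), so that $M\bigl(y_v\,s_1\bigr)(t) = y_v(t)\cdot\tfrac{1}{\varepsilon}\int_{t-\varepsilon}^t s_1\bigl(u(t),\tfrac{\tau}{\varepsilon}\bigr)d\tau + \Oinf{1}$; the integral is $\overline{s_1}(u(t)) = 0$ since $s_1$ has zero mean in the second argument and the window has length exactly one period. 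Hence $M(y_v\,s_1)(t) = \Oinf{1}$; a little more care (expanding $y_v(\tau) = y_v(t) + \Oinf{1}$ and $u(\tau) = u(t) + \Oinf{1}$ and integrating) shows this term is in fact $\Oinf{1}$ with the $\varepsilon$-order coefficient again an integral of a zero-mean periodic quantity — so $M(y_v\,s_1)(t) = \Oinf{2}$ actually, or at worst the surviving $\Oinf{1}$ part is itself a zero-mean periodic function of $t/\varepsilon$ times a slow factor, which is killed by the same extrapolation step. Either way, forming $\tfrac{3}{2}M(y)(t) - \tfrac12 M(y)(t-\varepsilon)$ and using $\dot y_a(t-\varepsilon) = \dot y_a(t) + \Oinf{1}$ cancels the $\tfrac{\varepsilon}{2}\dot y_a$ contributions and leaves $\widehat y_a(t) = y_a(t) + \Oinf{2}$, which is \eqref{eq:estya}.

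\textbf{Step 2 (estimate of $y_v$).} From Step 1, $y(\tau) - \widehat y_a(\tau) = y_v(\tau)\,s_1\bigl(u(\tau),\tfrac{\tau}{\varepsilon}\bigr) + \Oinf{2}$, so $k_\Delta(\tau) = y_v(\tau)\,s_1^2\bigl(u(\tau),\tfrac{\tau}{\varepsilon}\bigr) + \Oinf{2}$ (the $\Oinf{2}$ times the bounded $s_1$ stays $\Oinf{2}$). Apply $M$: freezing the slow arguments $y_v$ and $u$ at $t$ over the window, $M(k_\Delta)(t) = y_v(t)\cdot\tfrac{1}{\varepsilon}\int_{t-\varepsilon}^t s_1^2\bigl(u(t),\tfrac{\tau}{\varepsilon}\bigr)d\tau + \Oinf{2}$. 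Substituting $\sigma = \tau/\varepsilon$ turns the integral into $\int_0^1 s_1^2(u(t),\sigma)\,d\sigma = \overline{s_1^2}(u(t))$ (again the window is exactly one period, so no endpoint correction). Thus $M(k_\Delta)(t) = y_v(t)\,\overline{s_1^2}(u(t)) + \Oinf{2}$, and dividing by $\overline{s_1^2}(u(t))$ gives \eqref{eq:estyv} — provided $\overline{s_1^2}(u(t))$ is bounded away from zero, which holds on the operating range since $s_1(u,\cdot)$ is not identically zero.

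\textbf{Main obstacle.} The delicate point is the bookkeeping of the $\Oinf{1}$ oscillatory remainders when freezing the slow arguments $u(\tau)$ and $y_v(\tau)$ at $\tau = t$ inside $M$: one must verify that what is dropped is genuinely $\Oinf{2}$ after integration over a full period, rather than merely $\Oinf{1}$. The clean way is to expand $u(\tau) = u(t) + (\tau-t)\dot u(t) + \Oinf{2}$ and $y_v(\tau) = y_v(t) + (\tau - t)\dot y_v(t) + \Oinf{2}$ inside the integral and use that $\int_0^1 s_1(u,\sigma)\,d\sigma = 0$, $\int_0^1 \sigma\, s_1(u,\sigma)\,d\sigma$ is some bounded function (its contribution is $\Oinf{2}$ because of the extra factor $\tau - t = \Oinf{1}$), and similarly for $s_1^2$; the piecewise-$C^1$ regularity of $s_1$ in \emph{both} arguments, guaranteed by Remark~\ref{rem:regularity}, is exactly what makes these expansions legitimate. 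The extrapolation identity $\tfrac32 M(y)(t) - \tfrac12 M(y)(t-\varepsilon) = y_a(t) + \Oinf{2}$ is then just the standard Richardson-type cancellation of the $\tfrac{\varepsilon}{2}\dot y_a$ term and needs no cleverness beyond noticing $M(y)(t-\varepsilon)$ samples the same slow signal one period earlier.
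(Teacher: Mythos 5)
Your proof follows the same route as the paper's: expand $y = y_a + y_v\,s_1 + \Oinf{2}$, compute $M(y_a)(t) = y_a(t) - \tfrac{\eps}{2}\dot y_a(t) + \Oinf{2}$, kill the cross term via the zero mean of $s_1$, cancel the $\tfrac{\eps}{2}\dot y_a$ term by the Richardson combination, and identify $M(k_\Delta)(t) = y_v(t)\,\overline{s_1^2}\bigl(u(t)\bigr) + \Oinf{2}$ before dividing.

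There is, however, one spot where your bookkeeping does not close as written. When you freeze the slow arguments in $M(y_v s_1)$ (and likewise in $M(k_\Delta)$), the residual left over after the zero-mean leading term has the form $\eps\,\dot y_v(t)\cdot(\text{bounded}) + y_v(t)\cdot\Oinf{}$, which by itself is only $\Oinf{}$; and your fallback --- that the surviving $\Oinf{}$ part is a zero-mean periodic function of $t/\eps$ ``killed by the same extrapolation step'' --- is false: the combination $\tfrac32 M(y)(t) - \tfrac12 M(y)(t-\eps)$ samples any $1$-periodic function of $t/\eps$ at the \emph{same} phase in both terms (since $(t-\eps)/\eps = t/\eps - 1$), so it returns that function unchanged rather than cancelling it; only a further moving average would remove it. The correct, one-line fix --- which the paper invokes explicitly --- is that $y_v = \varepsilon h'(x)g(x) = \Oinf{}$ by construction, hence $\dot y_v = \Oinf{}$ as well, so every term you drop when freezing the slow arguments is genuinely $\Oinf{}\cdot\Oinf{} = \Oinf{2}$. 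With that fact inserted, your Steps 1 and 2 become exactly the paper's computation.
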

Recall that by construction $y_v(t)=\Oinf{}$, hence \eqref{eq:estyv} is essentially a first-order estimation; notice also that
$\overline{s_1^2}\bigl(u(t)\bigr)$ is always non-zero when $u(t)$ does not exceed the range of the PWM encoder.

\begin{proof}
Taylor expanding $y_a$, $y_v$, $u$ and~$s_1$ yields
\begin{IEEEeqnarray*}{rCl}
	y_a(t -\tau) &=& y_a(t) -\tau\dot y_a(t) + \bigO_\infty(\tau^2)\\
	y_v(t -\tau) &=& y_v(t) + \Oinf{}\bigO_\infty(\tau)\\	
	s_1\bigl(u(t-\tau),\sigma\bigr) &=& s_1\bigl(u(t)+\bigO_\infty(\tau),\sigma\bigr)\\
	&=& s_1\bigl(u(t),\sigma\bigr)+\bigO_\infty(\tau);
\end{IEEEeqnarray*}
in the second equation, we have used $y_v(t)=\Oinf{}$. 
The moving average of $y_a$ then reads 
\begin{IEEEeqnarray}{rCl}	
	\label{eq:Mya}
	M(y_a)(t) &=& \frac{1}{\varepsilon}\int_{0}^{\eps}y_a(t-\tau)d\tau\notag \\
	&=& \frac{1}{\varepsilon}\int_{0}^{\eps}\bigl(y_a(t)-\tau\dot y_a(t)+\bigO_\infty(\tau^2)\bigr)d\tau\notag \\
	&=& y_a(t) - \frac{\eps}{2}\dot y_a(t) + \Oinf{2}. 
\end{IEEEeqnarray}
A similar computation for $k_v(t):=y_v(t) s_1\bigl(u(t),\tfrac{t}{\varepsilon}\bigr)$ yields
\begin{IEEEeqnarray}{rCl}
	\label{eq:Myv}
	M(k_v)(t) &=& \frac{1}{\varepsilon}\int_{0}^{\eps}y_v(t-\tau)s_1\bigl(u(t-\tau),\tfrac{t-\tau}{\eps}\bigr)\,d\tau\notag\\
	%
%	&=& \frac{1}{\varepsilon}\int_{0}^{\eps}\bigl(y_v(t)+\varepsilon\bigO_\infty(\sigma)\bigr)
%	\Bigl(s_1\bigl(u(t),\tfrac{t-\sigma}{\eps}\bigr)+\bigO_\infty(\sigma)\Bigr)\,d\sigma\notag\\
%	%
%	&=& \frac{1}{\varepsilon}\int_{0}^{\eps}\Bigl(y_v(t)s_1\bigl(u(t),\tfrac{t-\sigma}{\eps}\bigr)+\bigO_\infty(\varepsilon\sigma)\Bigr)\,d\sigma\notag\\
%	%
%	&=& y_v(t)\frac{1}{\varepsilon}\int_{0}^{\eps}s_1\bigl(u(t),\tfrac{t-\sigma}{\eps}\bigr)\,d\sigma+\Oinf{2}\notag\\
%	%
	&=& y_v(t)\Bigl(\overline{s_1}\bigl(u(t)\bigr) + \Oinf{} \Bigr) +\Oinf{2}\notag\\
	&=& \Oinf{2},%REMEMBER~ y_v is \Oinf{}
\end{IEEEeqnarray}
since $s_1$ is 1-periodic and zero mean in the second argument. Summing \eqref{eq:Mya} and \eqref{eq:Myv}, we eventually find
\begin{IEEEeqnarray*}{rCl}	
	M(y)(t) &=& y_a(t) - \frac{\eps}{2}\dot y_a(t) + \Oinf{2}.
\end{IEEEeqnarray*}
As a consequence, we get after another Taylor expansion
\begin{IEEEeqnarray*}{rCl}	
%	\widehat y_a(t) &:=& \frac{3}{2}M(y)(t)-\frac{1}{2}M(y)(t-\eps) = y_a(t) + \Oinf{2},
	\frac{3}{2}M(y)(t)-\frac{1}{2}M(y)(t-\eps) &=& y_a(t) + \Oinf{2},\end{IEEEeqnarray*}
which is the desired estimation~\eqref{eq:estya}.

On the other hand, \eqref{eq:estya} implies
\begin{IEEEeqnarray*}{rCl}	
	k_\Delta(t)
	&=& y_v(t)s_1^2\bigl(u(t),\tfrac{t}{\eps}\bigr)+\Oinf{2}.
\end{IEEEeqnarray*}
Proceeding as for $M(k_v)$, we find
\begin{IEEEeqnarray*}{rCl}%\label{eq:MkDelta}
	M(k_\Delta)(t) &=& \frac{1}{\varepsilon}\int_{0}^{\eps}y_v(t-\tau)s_1^2\bigl(u(t-\tau),\tfrac{t-\tau}{\eps}\bigr)\,d\tau\notag\\
	&=& y_v(t)\Bigl(\overline{s_1^2}\bigl(u(t)\bigr) + \Oinf{} \Bigr) +\Oinf{2}\notag\\
	&=& y_v(t)\overline{s_1^2}\bigl(u(t)\bigr) + \Oinf{2}.
\end{IEEEeqnarray*}
Dividing by $\overline{s_1^2}\bigl(u(t)\bigr)$ yields the desired estimation~\eqref{eq:estyv}.
\end{proof}

\section{Numerical example}\label{sec:example}

\begin{figure}[t]
	\captionsetup[subfloat]{farskip=2pt,captionskip=2pt}
	\raggedleft
	\subfloat[State $x_1$, reference $x_1^{\rm ref}$, and virtual measurement $y_v$.]{\includegraphics{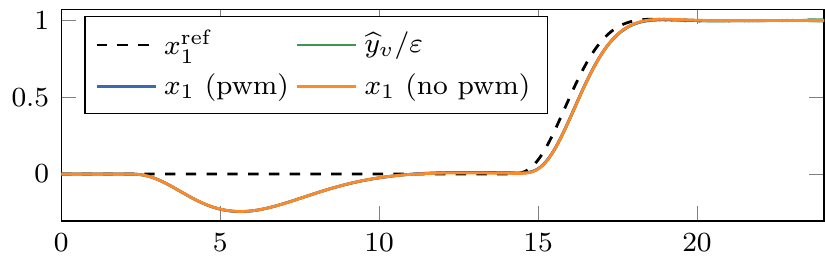}}\\
	\subfloat[State $x_2$.]{\includegraphics{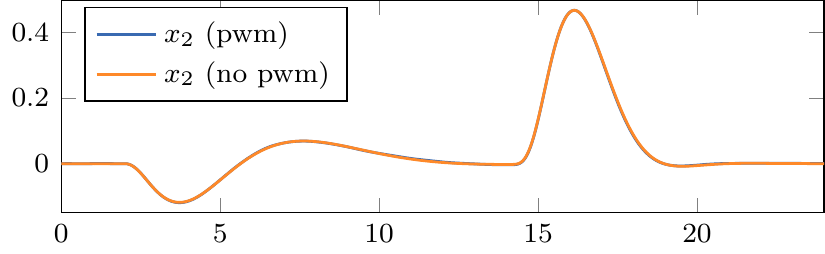}}\\
	\subfloat[State $x_3$.]{\includegraphics{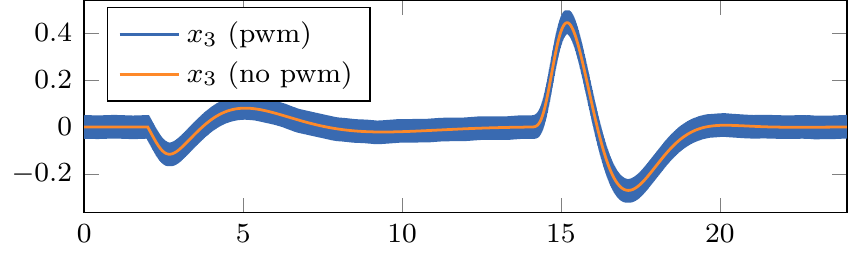}}\\
	\subfloat[State $x_3$, zoom on (c).]{\includegraphics{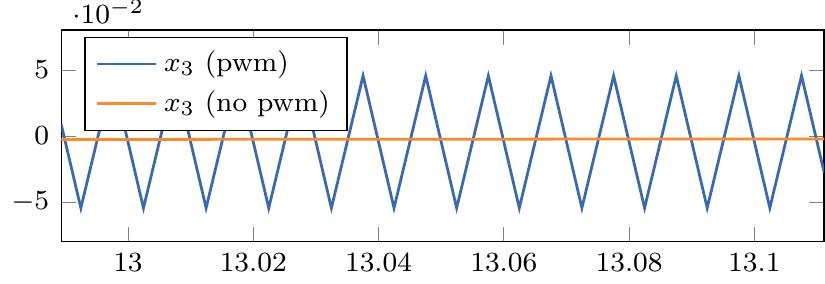}}
	\caption{States $x_1,x_2,x_3$ with ideal and actual control laws.}
	\label{fig:states}
\end{figure}

\begin{figure}[t]
	\includegraphics{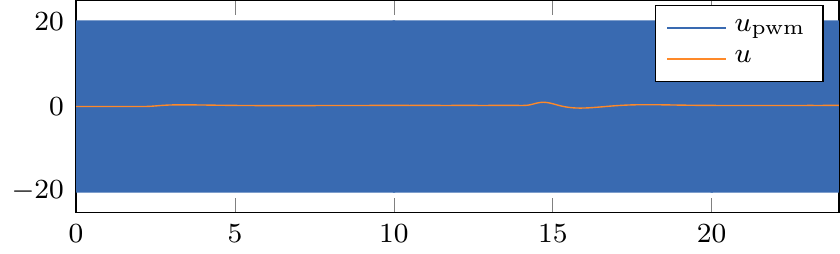}
	\includegraphics{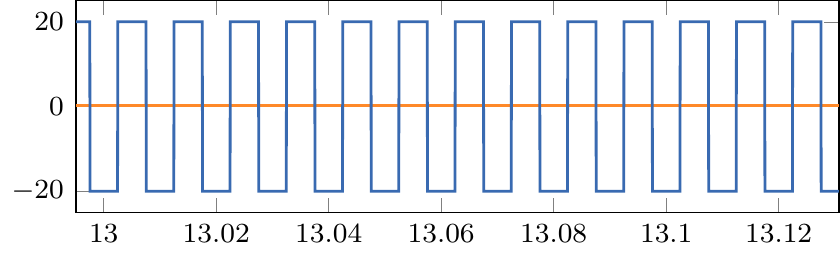}
	\caption{Control input $u$ and its modulation $u_{\rm pwm}$; full view (top), zoom (bottom).}
	\label{fig:u-upwm}
\end{figure}
\begin{figure}[ht]
	\raggedleft
	\includegraphics{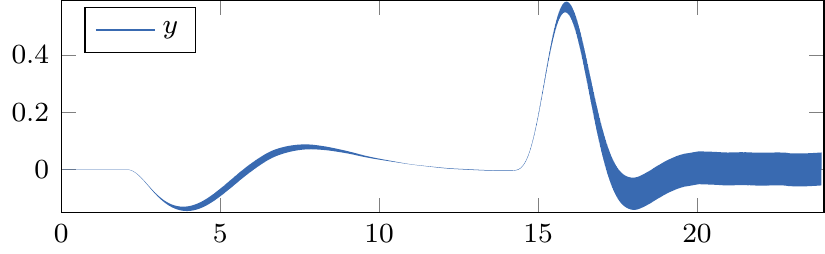}
	\includegraphics{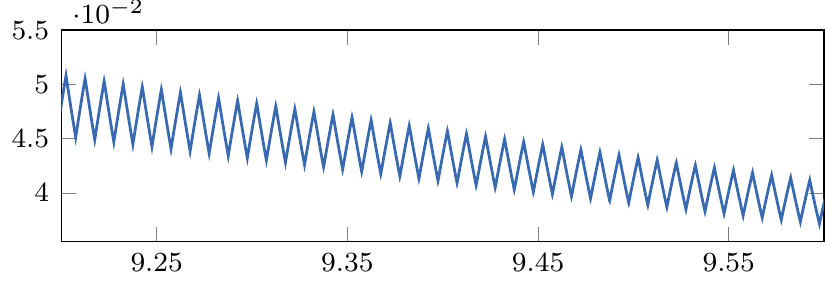}
	\caption{Measured output $y$ (top); full view (top), zoom (bottom).}
	\label{fig:y}
\end{figure}

We illustrate the interest of the approach on the system
\begin{IEEEeqnarray*}{rCl}	
	\dot x_1 &=& x_2, \\
	\dot x_2 &=& x_3, \\
	\dot x_3 &=& u + d, \\
	y &=& x_2 + x_1 x_3,
\end{IEEEeqnarray*}
where $d$ is an unknown disturbance; $u$ will be impressed through PWM with frequency~\SI{1}{\kilo\hertz} (i.e. $\varepsilon=\num{e-3}$) and range~$[-20,20]$. The objective is to control $x_1$, while rejecting the disturbance $d$, with a response time of a few seconds. We want to operate around equilibrium points, which are of the form $(x_1^{\rm eq}, 0, 0; -d^{\rm eq},d^{\rm eq} )$, for $x_1^{\rm eq}$ and $d^{\rm eq}$ constant. Notice the observability degenerates at such points, which renders not trivial the design of a control law. 

Nevertheless the PWM-induced signal injection makes available the virtual measurement
\begin{IEEEeqnarray*}{C}
	y_v = \varepsilon\begin{pmatrix}x_3& 1& x_1\end{pmatrix}\begin{pmatrix}0\\0\\1\end{pmatrix}  = \varepsilon x_1,
\end{IEEEeqnarray*}
from which it is easy to design a suitable control law, without even using the actual input $y_a=x_2+x_1x_3$. The system being now fully linear, we use a classical controller-observer, with disturbance estimation to ensure an implicit integral effect. The observer is thus given by
\begin{IEEEeqnarray*}{rCl}
	\dot{\widehat x}_1 &=& \widehat x_2 + l_1\bigl(\tfrac{y_v}{\varepsilon} - \widehat x_1\bigr),\\
	\dot{\widehat x}_2 &=& \widehat x_3+ l_2\bigl(\tfrac{y_v}{\varepsilon} - \widehat x_1\bigr),\\
	\dot{\widehat x}_3 &=& u + \widehat d + l_3\bigl(\tfrac{y_v}{\varepsilon} - \widehat x_1\bigr),\\
	\dot{\widehat d}       &=& l_d\bigl(\tfrac{y_v}{\varepsilon} - \widehat x_1\bigr),
\end{IEEEeqnarray*}
and the controller by
\begin{IEEEeqnarray*}{rCl}
	u = -k_1 \widehat x_1-k_2 \widehat x_2-k_3 \widehat x_3 - k_d \widehat d + kx_1^{\rm ref}.
\end{IEEEeqnarray*}
The gains are chosen to place the observer eigenvalues at $(-1.19,-0.73,-0.49 \pm 0.57i)$ and the controller eigenvalues at $(-6.59,-3.30 \pm 5.71i)$. The observer is slower than the controller in accordance with dual Loop Transfer Recovery, thus ensuring a reasonable robustness. Setting $\eta:=(\widehat x_1,\widehat x_2,\widehat x_3,\widehat d)^T$, this controller-observer obviously reads
\begin{subequations}
	\label{eq:idealLaw}
	\begin{IEEEeqnarray}{rCl}
		u &=& -K \eta+ kx_1^{\rm ref} \\
		\dot \eta &=& M\eta + Nx_1^{\rm ref}(t) + Ly_v 
	\end{IEEEeqnarray}
\end{subequations}
Finally, this ideal control law is implemented as
\begin{subequations}
	\label{eq:actualLaw}
	\begin{IEEEeqnarray}{rCl}
		u_{\rm pwm}(t) &=& \mathcal{M}\bigl(-K\eta+ kx_1^{\rm ref},\tfrac{t}{\varepsilon}\bigr) \\
		\dot \eta &=& M\eta + Nx_1^{\rm ref} + L \tfrac{\widehat y_v}{\varepsilon},
	\end{IEEEeqnarray}
\end{subequations}
where $\mathcal{M}$ is the PWM function described in section~\ref{sec:pwm}, and $\widehat y_v$ is obtained by the demodulation process of section~\ref{sec:demodulation}.

The test scenario is the following: at $t=0$, the system start at rest at the origin; from $t=2$, a disturbance $d=-0.25$ is applied to the system; at $t=14$, a filtered unit step is applied to the reference $x_1^{\rm ref}$. In Fig.~\ref{fig:states} the ideal control law~\eqref{eq:idealLaw}, i.e. without PWM and assuming $y_v$ known, is compared to the true control law~\eqref{eq:actualLaw}: the behavior of~\eqref{eq:actualLaw} is excellent, it is nearly impossible to distinguish the two situations on the responses of $x_1$ and $x_2$ as by~\eqref{subeq:x} the corresponding ripple is only $\Oinf{2}$; the ripple is visible on~$x_3$, where it is~$\Oinf{}$. The corresponding control signals $u$ and $u_{\rm pwm}$ are displayed in Fig.~\ref{fig:u-upwm}, and the corresponding measured outputs in Fig.~\ref{fig:y}.

To investigate the sensitivity to measurement noise, the same test was carried out with band-limited white noise (power density~\num{1e-9}, sample time~\num{1e-5}) added to~$y$. Even though the ripple in the measured output is buried in noise, see Fig.~\ref{fig:y_noise}, the virtual output is correctly demodulated and the control law~\eqref{eq:actualLaw} still behaves very well.  

\begin{figure}[ht]
	\raggedleft
	\includegraphics{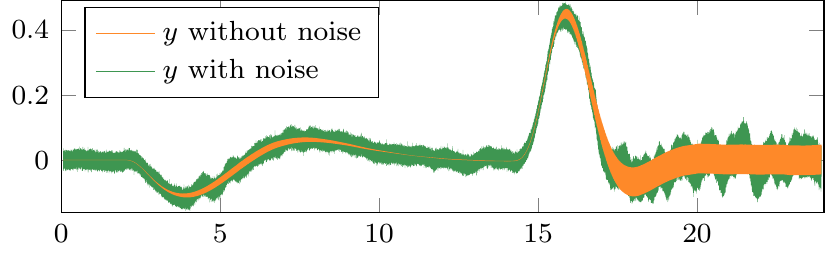}
	\includegraphics{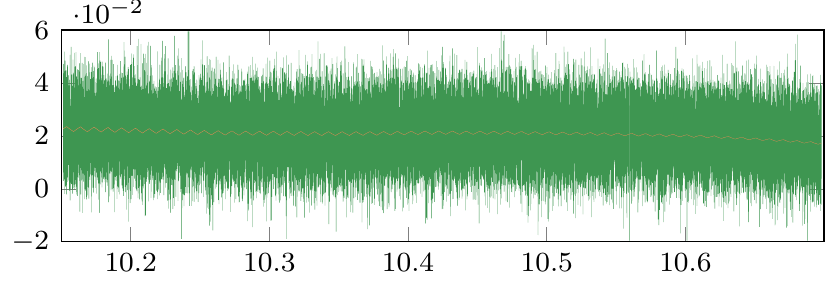}
	\caption{Measured output $y$ with and without noise; full view (top), zoom (bottom).}
	\label{fig:y_noise}
\end{figure}

\begin{figure}[ht]
	\raggedleft
	\includegraphics{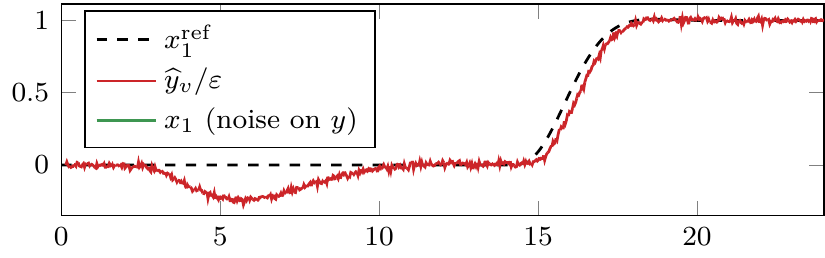}
	\caption{$x_1^{\rm ref}$, $x_1$, and $\widehat y_v$ in the presence of noise.}
	\label{fig:measurement_noise}
	\label{fig:x1_noise}
\end{figure}

\section*{Conclusion}
We have presented a method to take advantage of the benefits of signal injection in PWM-fed systems without the need for an external probing signal. For simplicity, we have restricted to Single-Input Single-Output systems, but there are no essential difficulties to consider Multiple-Input Multiple-Output systems. Besides, though we have focused on classical PWM, the approach can readily be extended to arbitrary modulation processes, for instance multilevel PWM; in fact, the only requirements is that $s_0$ and $s_1$ meet the regularity assumptions discussed in remark~\ref{rem:regularity}.

%TO DO\begin{itemize}	
%	\item fast ADC as is
%	\item CITE WOLLBANK + look for related references
%\end{itemize}

\bibliographystyle{phmIEEEtran}
\bibliography{biblio.bib}
\end{document}